\documentclass{amsart}
\usepackage{amssymb,amsmath,amsthm}
\usepackage{enumerate,color}
\usepackage{enumitem}
\usepackage{xcolor}
\usepackage[pdftex]{graphicx}
\newcommand{\dashint}{\int\!\!\!\!\!-}
\def\Xint#1{\mathchoice
{\XXint\displaystyle\textstyle{#1}}%
{\XXint\textstyle\scriptstyle{#1}}%
{\XXint\scriptstyle\scriptscriptstyle{#1}}%
{\XXint\scriptscriptstyle\scriptscriptstyle{#1}}%
\!\int}
\def\XXint#1#2#3{{\setbox0=\hbox{$#1{#2#3}{\int}$ }
\vcenter{\hbox{$#2#3$ }}\kern-.6\wd0}}

\def\dashint{\Xint-}

\usepackage[colorlinks=true, pdfstartview=FitV, linkcolor=blue, citecolor=blue, urlcolor=blue,pagebackref=true]{hyperref}
\usepackage{esint}
\usepackage{mathrsfs}

\usepackage{mathtools}

\newtheorem{theorem}{Theorem}[section]

\newtheorem{proposition}[theorem]{Proposition}

\theoremstyle{definition}
\newtheorem{definition}[theorem]{Definition}
\newtheorem{remark}[theorem]{Remark}

\numberwithin{equation}{section}

\newcommand{\loc}{{\rm loc}}

\DeclareMathOperator{\supp}{\operatorname{supp}}

\begin{document}
\title[Absence of local anomalous dissipation]{Absence of local anomalous dissipation and local energy balance in 2D incompressible flows away from the boundary}

\author[T. Barker et al.]{Tobias Barker}
\author[]{Milton C. Lopes Filho}
\author[]{Helena J. Nussenzveig Lopes}
\date{\today}
\begin{abstract}
For the 2D Navier-Stokes equations with no-slip boundary condition, we consider the issue of whether anomalous dissipation away from the boundary vanishes. In particular, we show that such vanishing occurs if $u^{\nu}$ is uniformily bounded in the Onsager supercritical space $L^{1+}_{t}L^{\infty}_{x,\loc}$ with appropriate bounds on the initial conditions. Our method involves arguments  from \cite{AD23} and \cite{CW23} involving localization via modulation, together with vorticity energy type estimates inspired by \cite{CFLS16} and estimates involving $L^2$-based structure functions inspired by \cite{DP25dissconc}.

Next we show that the aforementioned setting produces convergence to an Euler solution with its \textit{large scale approximation} satisfying a local energy balance equation. Notably, we do not assume any uniform-in-viscosity bounds on the pressure. The large scale approximation has been introduced in \cite{PGLR18} in the context of partial regularity of the 3D Navier-Stokes equations, yet to the best of our knowledge this is the first time it has been considered in the context of inviscid limits.
\end{abstract}

\maketitle

\tableofcontents

\section{Introduction} \label{s:intro}
For $\Omega\subseteq\mathbb{R}^2,\,\mathbb{R}^3$ and $\nu>0$, consider a solution $u^{\nu}$ the Navier-Stokes equations
 \begin{equation}\label{eq:NSE}
\begin{aligned}
	&\partial_{t}u^{\nu}-\nu\Delta u^{\nu}+u^{\nu}\cdot\nabla u^{\nu}+\nabla p^{\nu}=0,
	&\textrm{div}\,u^{\nu}=0,\qquad u^{\nu}(\cdot,0)=u^{\nu}_0.
\end{aligned}
\end{equation}
In the case where $\partial\Omega$ is non-empty, we impose the no-slip boundary condition
\begin{equation}\label{eq:noslip}
u^{\nu}|_{\partial\Omega}=0.
\end{equation}
A core feature of turbulence is the so called \textit{zeroth-law}
\begin{equation}\label{eq:zerolaw}
\lim_{\nu\rightarrow 0}\nu<|\nabla u^{\nu}|^2>=\epsilon>0,
\end{equation}
where $<\cdot>$ denotes a certain averaging process. For the case $\Omega=\mathbb{T}^3$, assuming solution bounds from the K41 theory of turbulence, \cite{K41.1.1}-\cite{K41.2.1} imply that $u^{\nu}$ converges up to subsequence to a weak solution to the Euler equations $u^{E}$ \cite{CG12}. This, together with the energy inequality for $u^{\nu}$
$$\|u^{\nu}(\cdot,t)\|_{L^{2}(\Omega)}^2+2\nu\int\limits_{0}^{t}\int\limits_{\Omega}|\nabla u^{\nu}|^2 dxds\leq \|u_{0}^{\nu}\|_{L^{2}(\Omega)}^2 $$ and the zeroth-law, then implies the non-conservation of the kinetic energy of the  above limiting Euler solution
$$t\rightarrow \|u^{E}(\cdot,t)\|_{L^{2}(\Omega)}^2.$$
Such considerations lead to the question:
\begin{itemize}
\item []\textit{What is the regularity threshold at which weak solutions of the Euler equations conserve kinetic energy?}
\end{itemize}
This is known in the mathematical community as `Onsager's conjecture', with the simplest version stating that solutions to Euler conserve energy if and only if they belong to $C^{\alpha}_{x,t}$ with $\alpha>\tfrac{1}{3}.$ Onsager's conjecture has attracted significant attention, which we cannot pretend to allude to. For $\Omega=\mathbb{T}^3$, the rigidity side of Onsager's conjecture was addressed in so-called Onsager (sub)critical spaces in \cite{E94} and \cite{CET94}, which culmulated in the energy conservation result for Euler solutions in the Onsager critical space $L^{3}_{t}{B}^{\frac{1}{3}}_{3,\mathbb{N}}$ in \cite{CCFS08} that is somewhat sharp. For further results on energy conservation in Onsager critical spaces, see \cite{DI24} and \cite{DIN24}, for example.

 The flexibility side of Onsager's conjecture has been investigated using the powerful machinery of `convex integration'. This was systematically introduced in \cite{DeSz09} and \cite{DeSz13} in the context of fluid dynamics. A series of convex integration works \cite{DeSz14}, \cite{BDeISz15}-\cite{BDeSz16} led to the full resolution of Onsager's conjecture from the flexibility side in \cite{I18}. For further results on Onsager's conjecture, see \cite{BDeSzV19}, \cite{BMNV23}, \cite{NV23}, \cite{GR24}, \cite{GKN25}, for example. For a thorough account of convex integration in fluid dynamics, we recommend the excellent surveys in \cite{BV19survey}-\cite{BV21survey} and \cite{DeSz22survey}, for example.
 
As mentioned previously, predictions of turbulence concern \eqref{eq:zerolaw} for Navier-Stokes equations and has implications for Euler solutions achieved in the vanishing viscosity limit. It is therefore most relevant to ask
\begin{itemize}
\item[]\textit{Do Euler solutions obtained in the vanishing viscosity limit (`physically realizable solutions') conserve kinetic energy or not?}
\end{itemize}
In $\mathbb{T}^2$, the work \cite{CFLS16} showed that in settings below Onsager rigidity thresholds (`Onsager supercritical'), anomalous dissipation \eqref{eq:zerolaw} vanishes and the corresponding physically realizable Euler solutions conserve the kinetic energy. Intriguingly, recently the existence of two dimensional convex integration solutions with vorticity integrability greater than 1 was shown in \cite{BCK24}. This convex integration solution is in the same Onsager supercritical regime as the result in \cite{CFLS16} establishing energy conservation for physically realizable solutions, demonstrating an important distinction between these two notions of solutions. Inspired by such issues, in this paper we investigate the following \textit{localized} questions in physical settings with a boundary present.
\begin{enumerate}[label={\bf {Q\arabic*.}}, ref={\bf {Q\arabic*}}]
\item  For a smooth bounded domain $\Omega\subset \mathbb{R}^2$ and for weak Leray-Hopf solutions\footnote{For the definition of weak Leray-Hopf solution to \eqref{eq:NSE}-\eqref{eq:noslip}, see Definition \ref{Lerays}.} $u^{\nu}$ with no-slip boundary condition,  can local anomalous dissipation be ruled out away from the boundary in Onsager supercritical regimes? \label{Q1}
\medskip
\item  In the setting of the previous question, what implications does local absence of anomalous dissipation have for local energy balance for the associated physically realizable solution $u^{E}$? \label{Q2}
\end{enumerate}

\subsection{Main results}
\subsubsection{Conditions on the initial data}
Let $\Omega\subset\mathbb{R}^2$ be a bounded domain with sufficiently smooth boundary and $1<p<\infty$.
Let $(u^{\nu}_{0})_{\nu>0}$ be a family of {\em initial velocities} in\footnote{ See the section `\ref{ss:Function spaces} Function spaces and norms' for the definition of $W^{1,2}_{0,\sigma}(\Omega)$.} $W^{1,2}_{0,\sigma}(\Omega)$ such that 
\begin{equation}\label{eq:L2iduniform} \tag{\textbf{Condition A}}
\sup_{\nu>0}\|u^{\nu}_{0}\|_{L^{2}(\Omega)}=J<\infty. 
\end{equation}
Throughout we will use the following notation: for $\Omega\subset \mathbb{R}^2$ being a bounded set we define $\Omega(r)$ to be the compact set
\begin{equation}\label{eq:domaindistfromboundary}
\Omega(r):=\{x\in\Omega: \textrm{dist}\,(x,\partial\Omega)\geq r\}.
\end{equation}
 In certain statements, we will also suppose that, for all $r>0$, there exists $N(r)>0$ such that the {\em initial vorticities} $\omega_0^{\nu}=-\partial_{2}(u_0^{\nu})_{1}+\partial_{1}(u_0^{\nu})_{2}$ satisfy the uniform bound
 \begin{equation}\label{eq:vortLpuniform} \tag{\textbf{Condition B}}
 \sup_{\nu>0}\|\omega^{\nu}_{0}\|_{L^{p}(\Omega(r))}= M(r)<\infty.
 \end{equation}
\subsubsection{Statement of results}

In the theorem below, we address \ref{Q1}.
\begin{theorem}\label{thm:nolocaanondis}
Let $\Omega\subset\mathbb{R}^2$ be a bounded domain with sufficiently smooth boundary and $1<p<\infty$. Let $(u^{\nu}_{0})_{\nu>0}$ be a family of initial data in $W^{1,2}_{0,\sigma}(\Omega)$ satisfying the uniform bounds in \ref{eq:L2iduniform},  \ref{eq:vortLpuniform}.

 For each $\nu>0$, let $u^{\nu}:\Omega\times (0,\infty)\rightarrow \mathbb{R}^2$ be the unique weak Leray-Hopf solution to \eqref{eq:NSE}-\eqref{eq:noslip} with initial data $u_{0}^{\nu}$.

 Suppose that there exists $\varepsilon>0$ and $T>0$ such that, for all $r>0$, there exists $N(r)>0$ with 
 \begin{equation}\label{eq:L1plusinfinity} \tag{\textbf{Condition C}}
 \sup_{\nu>0}\|u^{\nu}\|_{L^{1+\varepsilon}(0,T; L^{\infty}(\Omega(r)))}=N(r)<\infty.
 \end{equation}
 Then, for every compact set $K\subset\Omega$, there exists $t_{K}^{*}=t_{K}^{*}(K, p, \Omega, T,\varepsilon, N(\tfrac{1}{2}\textrm{dist}\,(K,\partial\Omega)))\in (0,T]$ such that
 \begin{equation}\label{eq:thm1conclusionvortbound}
 \sup_{\nu>0}\|\omega^{\nu}\|_{L^{\infty}(0,t_{K}^{*}; L^{p}(K))}<\infty\quad\textrm{and}
 \end{equation}
 \begin{equation}\label{eq:thm1conclusion}
 \lim_{\nu\rightarrow 0}\nu\int\limits_{0}^{t_{K}^{*}}\int\limits_{K} |\nabla u^{\nu}|^2 dxdt=0.
 \end{equation}
\end{theorem}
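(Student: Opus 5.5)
The plan is to derive a localized $L^p$ estimate for the vorticity and then use it for the dissipation.

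Fix $K$ and set $d=\textrm{dist}(K,\partial\Omega)$. Choose a cutoff $\phi\in C_c^\infty(\Omega(d/2))$ with $\phi\equiv1$ on $K$. The vorticity satisfies
$$\partial_t\omega^\nu+u^\nu\cdot\nabla\omega^\nu=\nu\Delta\omega^\nu$$
in $\Omega$, with no boundary information on $\omega^\nu$. The cutoff is therefore essential: the localized quantity $\phi\omega^\nu$ solves a transport–diffusion equation with commutator forcing
$$\omega^\nu u^\nu\cdot\nabla\phi-2\nu\nabla\phi\cdot\nabla\omega^\nu-\nu\omega^\nu\Delta\phi.$$
I would not test naively with $|\phi\omega|^{p-2}\phi\omega$, because that produces terms involving $\omega$ on $\supp\nabla\phi$, which are not controlled. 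Instead I would follow the modulation idea of \cite{AD23}, \cite{CW23}: use a family of nested cutoffs $\phi_s$, $s\in[0,1]$, interpolating between $\Omega(d/2)$ and $K$, with the radius shrinking in time. The time-dependent shrinkage produces a good-sign term $-\dot r(t)|\nabla\phi|\,|\omega|^p$, and this term absorbs the transport error
$$\int|\omega|^p\,|u^\nu|\,|\nabla\phi|.$$
Concretely, take $\phi_t=\chi\bigl((\textrm{dist}(x,K)-R(t))/\delta\bigr)$ with
$$R(t)=\frac{d}{2}-C\int_0^t\|u^\nu(s)\|_{L^\infty(\Omega(d/2))}\,ds.$$
Then $\partial_t\phi_t+u\cdot\nabla\phi_t\le0$, so the transport commutator has a sign. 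Condition C ensures $R(t)\ge d/4$ for $t\le t_K^*$, uniformly in $\nu$, by Hölder's inequality: $\int_0^t\|u\|_\infty\le t^{\varepsilon/(1+\varepsilon)}N$. This gives $t_K^*$ depending on $T,\varepsilon$ and $N(d/2)$.

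The viscous commutator terms remain. For these I would work with a power $\phi^{m}$ and estimate
$$\nu\int|\omega|^{p}\,|\nabla\phi|^2\phi^{m-2}\lesssim\nu\int_{\Omega(d/4)}|\omega|^p.$$
This is controlled using the energy-type quantity of \cite{CFLS16}: the dissipative term
$$\nu\int\phi^m|\omega|^{p-2}|\nabla\omega|^2$$
absorbs part of it via Young's inequality. The remainder needs $\nu\int_0^t\|\omega\|_{L^p(\Omega(d/4))}^p$. For $p\le2$ this follows from the Leray–Hopf energy bound $\nu\int\|\nabla u\|_2^2\le J^2$ and Hölder's inequality on a bounded set. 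For $p>2$ I would close by a Grönwall or iteration argument over a finite chain of domains, using the data bound from Condition B on $\Omega(d/4)$. The outcome is \eqref{eq:thm1conclusionvortbound} for $K$.

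With the uniform bound $\omega^\nu\in L^\infty(0,t_K^*;L^p(K'))$ on a slightly larger compact $K'$, I would prove \eqref{eq:thm1conclusion} through the local energy equality tested against $\psi^2$, with $\psi$ a cutoff for $K$ supported in $K'$. Write $2\nu\int\psi^2|\nabla u|^2$ as the limit, as $\ell\to0$, of the flux through mollified quantities, or more directly use $L^2$ structure functions in the spirit of \cite{DP25dissconc}. Local interior elliptic estimates give $\nabla u^\nu$ uniformly bounded in $L^\infty_tL^p(K')$, since the velocity is controlled in $L^2$. Hence
$$\sup_{|h|<\ell}\|u(\cdot+h)-u\|_{L^2(K)}\lesssim\ell^{\alpha},\qquad \alpha=\min(1,2-2/p)>0,$$
uniformly in $\nu$. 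Then split
$$\nu\|\nabla u\|_{L^2(K)}^2\le\nu\|\nabla u_\ell\|^2+\nu\|\nabla(u-u_\ell)\|^2.$$
The first piece is at most $\nu\ell^{-2}\ell^{2\alpha}$. The second is handled by a local enstrophy or $H^1$ estimate in which the dissipation appears as a flux term of order $\ell^{\alpha}$. Choosing $\ell=\ell(\nu)\to0$ then gives convergence to zero. For $p\ge2$ this is simpler: $\nu\int|\nabla u|^2\le\nu\,t\,C\to0$ directly from the $L^2$ bound on $\nabla u$.

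I expect the main obstacle to be the first step, closing the localized $L^p$ vorticity estimate when $p$ is small. There the viscous commutator terms require exactly the delicate absorption of \cite{CFLS16} combined with the modulation, and the $p<2$ dissipation limit needs the structure-function splitting.
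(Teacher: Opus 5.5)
Your shrinking cutoff is the paper's modulation written in physical variables: the paper's $\int|\tilde\omega^\nu|^p\varphi\,dy$ equals $\lambda^{-2}\int|\omega^\nu|^p\varphi(x/\lambda(t))\,dx$. For $1<p\le 2$ the localized $L^p$ bound closes exactly as you say, via H\"older and $\nu\int_0^T\|\omega^\nu\|_{L^2}^2\le J^2$. Your argument for \eqref{eq:thm1conclusion} when $p\ge 2$ is also fine, using the exponent-$2$ bound, which closes by energy.

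For $p>2$, however, the vorticity bound does not close as described. The viscous remainder $\nu\int_0^t\int|\omega^\nu|^p$ always lives on a larger region than the one being estimated, and no uniform $L^p$ control is available there. So neither Gr\"onwall nor a chain of nested domains at fixed exponent $p$ ever ends in a controlled quantity. What is missing is an integrability upgrade:
\begin{enumerate}
\item The estimate at exponent $q$ also bounds $\sup_t\int|\omega^\nu|^q$ and $\nu\int_0^t\int|\nabla|\omega^\nu|^{q/2}|^2$ on a smaller ball.
\item Ladyzhenskaya's inequality, applied to a cutoff of $|\omega^\nu|^{q/2}$, then bounds $\nu\int_0^t\int|\omega^\nu|^{2q}$ on a still smaller ball.
\item This feeds the estimate at any exponent $\le 2q$, on a smaller ball and a shorter time interval.
\end{enumerate}
Starting from $q=2$, finitely many such steps reach any $p<\infty$.

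The more serious gap is \eqref{eq:thm1conclusion} for $1<p<2$. The static information you list does not rule out $O(1)$ dissipation. That information is a uniform $L^\infty_tL^p$ bound on $\nabla u^\nu$, $L^2$ structure functions $\lesssim\ell^\alpha$, and $\nu\int_0^T\|\nabla u^\nu\|_{L^2}^2\le J^2$. For example, take $|\nabla u|\sim a$ on a disc of radius $\rho$ with $a^p\rho^2=1=\nu a^2\rho^2$. So everything hinges on your unspecified bound for $\nu\|\nabla(u-u_\ell)\|_{L^2}^2$.

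The \cite{DP25dissconc}-type way to obtain that bound goes through $\nu^2\int\int|\nabla\omega^\nu|^2$. A local enstrophy estimate controls this only on $(\delta,t)$, with a factor $\delta^{-1}$, because $\omega_0^\nu$ is not uniformly in $L^2$. This yields vanishing dissipation on $(\delta,t_K^*)$, which is what the paper obtains for Theorem \ref{thm:localenergybalancecompactL3}, but not on $(0,t_K^*)$. The local energy equality route has a different obstruction: it involves $p^\nu$, for which no uniform interior bounds are available.

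The missing idea is already in your first step: the localized estimate controls $\nu\int_0^t\int\psi|\nabla|\omega^\nu|^{p/2}|^2$. Apply Gagliardo--Nirenberg, $\|f\|_{L^{4/p}}\lesssim\|f\|_{L^2}^{p/2}\|\nabla f\|_{L^2}^{1-p/2}$, to $f=\psi|\omega^\nu|^{p/2}$ and combine with H\"older in time. This gives $\nu\int_0^t\|\omega^\nu\|_{L^2(K')}^2\lesssim(\nu t)^{2-2/p}+\nu t$ uniformly from $t=0$, with constants depending on $p,d,M,J$. A local elliptic estimate $\int_K|\nabla u|^2\lesssim\int_{K'}|\omega|^2+d^{-2}\int_{K'}|u|^2$ then gives \eqref{eq:thm1conclusion}. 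This is the paper's route (Propositions \ref{eq:aprioriest2} and \ref{eq:aprioriest3}), and it makes the structure-function splitting unnecessary here.
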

We address \ref{Q2} below for weak Euler solutions\footnote{See Definition \ref{def:weaksol}.} arising in the vanishing viscosity limit.
We address this in terms of a suitably defined local Leray projection/large scale approximation of the Euler solution, which will be introduced in \eqref{eq:locallerayprojectiondef}, and its local energy balance equation (see Definition \ref{def:locallerayenergybalance}).
 \begin{theorem}\label{thm:localenergybalance}
 Suppose all the assumptions in Theorem \ref{thm:nolocaanondis}  are valid. Then there exists a subsequence of $(u^{\nu})_{\nu>0}$, which we will not relabel, such that the following holds.
 
  For all $r>0$, there exists $t^*_{r}=t^*_{r}({r,\Omega,T,\varepsilon,N(r)})\in (0,T]$ such that 
 \begin{equation}\label{eq:strongL3}
 u^{\nu}\rightarrow u^{E}\quad\textrm{in}\quad L^{3}(0,t^*_{r}; L^{3}(\Omega(r))).
 \end{equation}
 In particular, $u^{E}$ is a weak solution to 2D Euler on $\Omega(r)\times(0,t^*_{r})$.
   
 Let $\varphi\in C^{\infty}_{0}(\Omega; [0,1])$ be a smooth cut-off function such that $\supp(\varphi)\Subset\Omega(r)$. Then, for any open set $\Omega_0$ satisfying 
\begin{equation}\label{eq:supportvarphi}
 \Omega_{0}\Subset \{x:\varphi(x)=1\},
 \end{equation}
 $\mathbb{P}_{\varphi}(u^{E})$ satisfies the aforementioned local energy balance equation in $\Omega_{0}\times (0,t^*_{r})$.

 \end{theorem}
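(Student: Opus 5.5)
Theorem \ref{thm:localenergybalance} will be derived from Theorem \ref{thm:nolocaanondis} together with a compactness argument and a localized Leray projection, and at no point will we need to control the pressure. Fix $r>0$. We take $K=\Omega(r/2)$, or a slightly smaller compact set containing $\Omega(r)$, and apply Theorem \ref{thm:nolocaanondis} to it. This produces a time $t^*_r$ and a bound on $\omega^\nu$ in $L^\infty(0,t^*_r;L^p(\Omega(r)))$ that is uniform in $\nu$. Combined with \ref{eq:L2iduniform}, the energy inequality and \ref{eq:L1plusinfinity}, we obtain:
\begin{itemize}
\item uniform bounds on $u^\nu$ in $L^\infty_t L^2_x$;
\item local $W^{1,p}$ bounds on $u^\nu$ on slightly smaller sets, using the Biot--Savart law with a cut-off together with the $L^2$ bound to handle the far field;
\item bounds on $u^\nu$ in $L^{1+\varepsilon}_tL^\infty_x$.
\end{itemize}

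To get time compactness, we pair the equation with divergence-free test functions supported in $\Omega(r)$. This eliminates the pressure and gives a bound on $\partial_t u^\nu$ in $L^1_t(H^{-s}_\sigma)$, so an Aubin--Lions type argument yields strong convergence of a subsequence in $L^2_tL^2_{x}$ on compact subsets. Interpolating the $L^2$ convergence against the uniform $L^{1+\varepsilon}_tL^\infty_x$ bound and the $L^\infty_tL^2_x$ bound gives convergence in $L^3_{t,x}$. Concretely, H\"older gives $\|f\|_{L^3}^3\le \|f\|_{L^2}^{a}\|f\|_{L^{1+\varepsilon}L^\infty}^{b}\cdots$, and alternatively the $L^p$ vorticity bound with Sobolev embedding already controls $L^\infty_tL^{q}_x$ for large $q$. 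A diagonal argument over $r=1/n$ then gives a single subsequence, and passing to the limit in the weak formulation gives \eqref{eq:strongL3} together with the weak Euler property.

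For the local energy balance, the plan is to define $\mathbb{P}_\varphi(u)=u-\nabla\Delta^{-1}\mathrm{div}(\varphi u)$, or the analogous definition in \eqref{eq:locallerayprojectiondef}. Its key property is that it annihilates gradients in $\Omega_0$, since there $\varphi=1$. Applying $\mathbb{P}_\varphi$ to Navier--Stokes removes the pressure in $\Omega_0$, up to a harmonic remainder that is smooth in $\Omega_0$. This gives an equation for $v^\nu=\mathbb{P}_\varphi u^\nu$ of the form $\partial_tv^\nu-\nu\Delta v^\nu+\mathbb{P}_\varphi\mathrm{div}(u^\nu\otimes u^\nu)=0$ in $\Omega_0$. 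We multiply this by $v^\nu\psi$ with $\psi\in C_0^\infty(\Omega_0\times(0,t_r^*))$ and obtain the exact local energy equality at viscosity $\nu$. The nonlinear term converges because $u^\nu\to u^E$ in $L^3$, and the operator $\mathbb{P}_\varphi$ is bounded on $L^{3/2}$ and $L^3$ (Calder\'on--Zygmund). The viscous terms are $\nu\int|\nabla v^\nu|^2\psi$ and $\nu\int |v^\nu|^2\Delta\psi$. The second is trivially $o(1)$. The first is bounded by $\nu\int_0^{t^*}\int_{K}|\nabla u^\nu|^2$ plus a nonlocal commutator term, and it vanishes by \eqref{eq:thm1conclusion} together with $\nu\|u^\nu\|_{L^2}^2\to0$.

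The main obstacle is this last step. The operator $\mathbb{P}_\varphi$ is nonlocal, so $\nabla v^\nu$ on $\Omega_0$ involves $\nabla(\varphi u^\nu)$ on all of $\supp\varphi$, and we must check that this is controlled by the local dissipation on $\Omega(r)$ rather than by the global one. To handle it, I would write $\nabla v^\nu=\nabla u^\nu-\nabla^2\Delta^{-1}(\varphi\,\mathrm{div}u^\nu+u^\nu\cdot\nabla\varphi)$ and use $\mathrm{div}\,u^\nu=0$. The remainder $\nabla^2\Delta^{-1}(u^\nu\cdot\nabla\varphi)$ is then bounded in $L^2$ by $\|u^\nu\|_{L^2}$, so multiplying by $\nu$ it vanishes. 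Finally, the resulting identity is passed to the limit term by term in the distributional sense on $\Omega_0\times(0,t_r^*)$.
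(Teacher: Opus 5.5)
Your compactness half is essentially the paper's. The paper quotes the Aubin--Lions argument of \cite{SWW24} to get $u^\nu\to u^E$ in $C([0,t^*_n];L^2(\Omega(\tfrac1n)))$, then uses $\|f\|_{L^3_{x,t}}\le\|f\|_{L^\infty_tL^2_x}^{2/3}\|f\|_{L^1_tL^\infty_x}^{1/3}$; your $L^2_{x,t}$ variant with an extra $L^\infty_tL^2_x$ factor also works. The gap is in the energy-balance half, at exactly the point the theorem is built around. The operator you write first, $u-\nabla\Delta_{2D}^{-1}\mathrm{div}(\varphi u)$, does \emph{not} annihilate gradients on $\Omega_0$. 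Since $\mathrm{div}(\varphi\nabla p)=\Delta(\varphi p)-\mathrm{div}(p\nabla\varphi)$, on $\Omega_0$ one gets
\[
\nabla p-\nabla\Delta_{2D}^{-1}\mathrm{div}(\varphi\nabla p)=\nabla\Delta_{2D}^{-1}\mathrm{div}(p\nabla\varphi).
\]
This field is harmonic in $\Omega_0$ but is built from $p^\nu$ on $\supp\nabla\varphi$. To pass to the limit in its pairing with $v^\nu\psi$ you would need uniform-in-$\nu$ bounds on $p^\nu$ (modulo constants) near $\supp\nabla\varphi$, e.g.\ in $L^{3/2}_tL^1_x$. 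Nothing of the sort is available under no-slip, and this is the obstruction the theorem is designed to avoid. The paper's $\mathbb{P}_\varphi$ works not because $\varphi=1$ on $\Omega_0$, but because of its curl form \eqref{eq:locallerayprojectcurl}: $\mathbb{P}_\varphi(\nabla p)=\nabla^{\perp}\Delta_{2D}^{-1}(\varphi\,\nabla^{\perp}\cdot\nabla p)=0$ identically. Hence the equation \eqref{eq:localLerayeqn} contains only explicit functionals of $u^\nu$, namely $p^\varphi_{u\otimes u}$, $F^\varphi_{u\otimes u}$ and $[\Delta,\mathbb{P}_\varphi]u$. You must commit to that definition; a ``harmonic remainder'' is harmless only if it contains no $p^\nu$.

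Second, your justification of the nonlinear term does not work as stated. The term $\int\mathbb{P}_\varphi\mathrm{div}(u^\nu\otimes u^\nu)\cdot v^\nu\psi$ carries a derivative, so $L^q$-boundedness of $\mathbb{P}_\varphi$ plus $L^3$ convergence does not give its continuity. The missing step is the decomposition \eqref{eq:commutatornonlinear}: on $\Omega_0$, $\mathbb{P}_\varphi(u\cdot\nabla u)=u\cdot\nabla u+\nabla p^\varphi_{u\otimes u}+F^\varphi_{u\otimes u}$, with $F^\varphi_{u\otimes u}$ smooth there. Combine this with $v=u-h_\varphi(u)$ on $\Omega_0$ and use $\mathrm{div}\,u=\mathrm{div}\,v=0$. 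The pairing with $2v\psi$ (integrals over $\Omega_0\times(0,t^*_r)$) then becomes
\[
-\int|u|^2u\cdot\nabla\psi+2\int u\otimes u:\nabla(\psi h_\varphi(u))-2\int p^\varphi_{u\otimes u}\,v\cdot\nabla\psi+2\int F^\varphi_{u\otimes u}\cdot v\psi .
\]
This has no derivatives on $u$ and is exactly the form in Definition \ref{def:locallerayenergybalance}. Only in this form do $L^3$ convergence, the Calder\'on--Zygmund bound \eqref{eq:localpressureest}, and the smooth-kernel bounds \eqref{eq:hestimate}, \eqref{eq:nonlincommutator} let you pass to the limit. By contrast, the dissipation term you single out as the main obstacle is routine. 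Either \eqref{eq:CZlocalleray} or the bound on $\nabla h_\varphi$ over $\Omega_0$ controls $\nu\int|\nabla\mathbb{P}_\varphi u^\nu|^2\psi$ by $\nu\int_0^{t^*_r}\int_{\Omega(r)}|\nabla u^\nu|^2+\nu t^*_rJ^2$, much as you say.
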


 The assumption on the initial vorticities, \ref{eq:vortLpuniform}, is restrictive and, in view of recent results, see \cite{LMP21} and \cite{DP25dissconc}, it is desirable to weaken it. With this in mind, in our final result we show that we can obtain the same conclusion as in  Theorem 
 \ref{thm:localenergybalance} by substituting  \ref{eq:vortLpuniform} by a compactness condition on the family $\{u^{\nu}\}_{\nu > 0}$.
 
More precisely, we have:

\begin{theorem}\label{thm:localenergybalancecompactL3}
 Let $\Omega\subset\mathbb{R}^2$ be a bounded domain with sufficiently smooth boundary. Let $(u^{\nu}_{0})_{\nu>0}$ be a family of initial data in $W^{1,2}_{0,\sigma}(\Omega)$ satisfying \ref{eq:L2iduniform} and assume that $u^{\nu}$ satisfies \ref{eq:L1plusinfinity}. Suppose, additionally, that there exists a subsequence of $(u^{\nu})_{\nu>0}$, not relabeled, such that
\begin{equation}\label{eq:strongL3thm3}
 u^{\nu}\rightarrow u^{E}\quad\textrm{in}\quad L^{3}(0,T; L^{3}_{loc}(\Omega)).
\end{equation}

Let $\varphi\in C^{\infty}_{0}(\Omega; [0,1])$ be a smooth cut-off function such that $\supp(\varphi)\Subset\Omega(r)$. Then, for any open set $\Omega_0$ satisfying 
\begin{equation}\label{eq:supportvarphiAGAIN}
 \Omega_{0}\Subset \{x:\varphi(x)=1\},
 \end{equation}
there exists $t^*_{r}=t^*_{r}({r,\Omega,T,\varepsilon,N(r)})\in (0,T]$ such that $\mathbb{P}_{\varphi}(u^{E})$ satisfies the local energy balance equation in $\Omega_{0}\times (0,t^*_{r}).$
 \end{theorem}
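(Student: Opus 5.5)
Theorem \ref{thm:localenergybalancecompactL3} has the same conclusion as Theorem \ref{thm:localenergybalance}, with Condition B replaced by the strong $L^3_{loc}$ compactness \eqref{eq:strongL3thm3}. So the plan is to rerun the second half of the proof of Theorem \ref{thm:localenergybalance}, isolating the two places where Condition B entered: (i) producing strong $L^3$ convergence, now assumed; (ii) showing that the local viscous dissipation $\nu\int\!\!\int \varphi^2|\nabla u^\nu|^2$ vanishes.

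\textbf{Step 1: the approximating local energy equalities.} Set $v^\nu:=\mathbb{P}_\varphi(u^\nu)$, the local Leray projection from \eqref{eq:locallerayprojectiondef}. Since $\varphi u^\nu$ is compactly supported in $\Omega(r)$, $v^\nu$ differs from $\varphi u^\nu$ by a gradient $\nabla \pi^\nu$. On $\{\varphi=1\}$ this gradient is harmonic, because there $\operatorname{div}(\varphi u^\nu)=0$ and $\operatorname{curl}$ of a gradient vanishes. Hence on $\Omega_0$ the field $v^\nu$ solves Navier--Stokes with a modified pressure. That pressure is built only from the nonlocal, harmonic-in-$\Omega_0$ part of $u^\nu\otimes u^\nu\varphi$ and from $\partial_t\nabla\pi^\nu$; this is the mechanism of \cite{PGLR18}. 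Write the local energy equality for $v^\nu$ on $\Omega_0\times(0,t)$ against test functions $\psi\in C^\infty_0(\Omega_0\times(0,t^*_r))$. All terms are expressed through $u^\nu$ and singular-integral/harmonic-extension operators applied to $\varphi u^\nu$ and $\varphi u^\nu\otimes u^\nu$, so no uniform pressure bound is needed.

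\textbf{Step 2: passing to the limit in the nonlinear terms.} By \eqref{eq:strongL3thm3}, $u^\nu\to u^E$ in $L^3(0,T;L^3(\operatorname{supp}\varphi))$. Boundedness of $\mathbb{P}_\varphi$ on $L^3$, together with the smoothing of the harmonic parts on $\Omega_0\Subset\{\varphi=1\}$, gives $v^\nu\to\mathbb{P}_\varphi(u^E)$ in $L^3_{t,x}$ and convergence of the cubic flux and pressure-flux terms. The time-derivative term $\partial_t\nabla\pi^\nu$ has to be handled as in Theorem \ref{thm:localenergybalance}, via the equation and the $L^2$ bound from Condition A. The linear term $\nu\int |v^\nu|^2\Delta\psi$ vanishes trivially.

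\textbf{Step 3: vanishing of local dissipation, the main obstacle.} The dissipation of $v^\nu$ on $\Omega_0$ is controlled by $\nu\int_0^{t^*_r}\!\int_{\Omega(r)}|\nabla u^\nu|^2$ plus harmless terms. Theorem \ref{thm:nolocaanondis} gives this via Condition B, which is no longer available, so I would instead derive it from compactness plus Condition C. The idea, following \cite{DP25dissconc}, is to mollify the local energy inequality at scale $\ell$. By Condition C, the flux through scale $\ell$ is bounded by
\[
\int_0^{t}\|u^\nu\|_{L^\infty(\Omega(r/2))}\,\sup_{|h|\le\ell}\|u^\nu(\cdot+h)-u^\nu\|_{L^2}^2\,\ell^{-1}\,dt,
\]
and the viscous part by $\nu\ell^{-2}$. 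Strong $L^3$ compactness gives uniform smallness of the $L^2$ structure function $\sup_{|h|\le \ell}\|\delta_h u^\nu\|_{L^2_{t,x}}$ as $\ell\to0$, uniformly in $\nu$. Converting this into the $\ell^{-1}$-weighted estimate is delicate, and I expect it to be the real difficulty.

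First attempt: use the vorticity-energy estimate from Theorem \ref{thm:nolocaanondis}, but only at the localized $L^2$-level (the case $p=2$ enstrophy balance localized by $\varphi$), with the initial-vorticity term replaced by modulation in time as in \cite{AD23,CW23}. This would give a bound on $\nu\int\!\!\int\varphi^2|\nabla u^\nu|^2$ by $o(1)$ plus the limiting Euler flux. The limiting flux vanishes because $u^E\in L^{1+\varepsilon}_tL^\infty_{x,loc}$, by lower semicontinuity, so the Euler solution is Onsager-regular enough in combination with its compactness. Restricting to small times $t^*_r$, determined by $N(r)$ and $\varepsilon$ through H\"older's inequality in time, closes the argument. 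Finally, matching the limit of the local energy equalities with the resulting equation for $\mathbb{P}_\varphi(u^E)$ yields the local energy balance in $\Omega_0\times(0,t^*_r)$.
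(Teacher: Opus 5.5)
\textbf{Gap in Step 3.} Your Steps 1--2 track the second half of the paper's proof, apart from one slip. $\mathbb{P}_\varphi(u)-\varphi u=-h_\varphi(u)$ is not a pure gradient: it is exactly the extra $\nabla^\perp\Delta_{2D}^{-1}(\nabla^\perp\varphi\cdot u)$ part that makes $\mathbb{P}_\varphi(\nabla p^\nu)=0$. Applying $\mathbb{P}_\varphi$ to Navier--Stokes then gives \eqref{eq:localLerayeqn} directly, with no $\partial_t\nabla\pi^\nu$ term to handle.

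The genuine gap is Step 3, which is the only new content relative to Theorem \ref{thm:localenergybalance}. Your scale-$\ell$ flux bound needs $\sup_{|h|\le\ell}\|\delta_h u^\nu\|_{L^2}^2=o(\ell)$ uniformly in $\nu$, and, as you note, compactness does not give this. The fallback does not close either:
\begin{itemize}
\item The localized $p=2$ estimate of Proposition \ref{prop:apriori1} is controlled by $\int|\omega^\nu_0|^2$, which is not uniformly bounded without Condition B.
\item The modulation $y\mapsto\lambda(t)y$ is a spatial rescaling that absorbs the transport flux through the cut-off. It cannot remove that initial term.
\item The enstrophy estimate produces no ``limiting Euler flux''.
\end{itemize}
Most seriously, the claim that this flux vanishes because $u^E\in L^{1+\varepsilon}_tL^\infty_{x,\mathrm{loc}}$ is false. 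That space carries no spatial regularity and is Onsager supercritical, and bounded weak Euler solutions that dissipate energy exist. Moreover, given strong $L^3_{\mathrm{loc}}$ convergence, vanishing of the defect for $\mathbb{P}_\varphi(u^E)$ is essentially the conclusion itself, so this step assumes what is to be proved.

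\textbf{What is missing.} The missing ideas are those of Proposition \ref{prop:aprioriest1.1}.

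First, average over the starting time. Apply the modulated $L^2$-vorticity estimate on $(t,T)$ for every $t\in(\delta/2,T)$, multiply by $\nu$ and integrate in $t$. The ``initial'' term becomes $\nu\int_{\delta/2}^T\int|\omega^\nu|^2\lesssim J^2$ by the energy equality. Fubini then gives $\nu^2\int_\delta^T\int_{B(3R_0/8)}|\nabla\omega^\nu|^2\lesssim(\delta^{-1}+\nu T\delta^{-1}R_0^{-2})J^2$. This is the only place Condition C enters. It is also why vanishing dissipation is proved only on $(\delta,t^*_r)$, which suffices because test functions are compactly supported in time.

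Second, mollify at the viscous scale $\ell=\nu^{1/2}$ and split
$\nu\int|\nabla u^\nu|^2\varphi=\nu\int\nabla u^\nu:\nabla(u^\nu-u^\nu_{\nu^{1/2}})\varphi+\nu\int\nabla u^\nu:\nabla u^\nu_{\nu^{1/2}}\varphi$.
Integrate the first term by parts, using $|\Delta u^\nu|=|\nabla\omega^\nu|$, and apply Cauchy--Schwarz. Every factor is then one of the following:
\begin{itemize}
\item $\nu^2\|\nabla\omega^\nu\|_{L^2}^2$, controlled by the first step;
\item $\nu\|\nabla u^\nu\|_{L^2}^2\lesssim J^2$;
\item $\|u^\nu-u^\nu_{\nu^{1/2}}\|_{L^2}^2$ or $\nu\|\nabla u^\nu_{\nu^{1/2}}\|_{L^2}^2$, both bounded by the $L^2$ structure function at scale $\nu^{1/2}$.
\end{itemize}
Because $\nu\ell^{-2}=1$, only $o(1)$ smallness of that structure function is needed. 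This follows from the assumed strong $L^2_{\mathrm{loc}}$ convergence by the triangle inequality, and the nonlinear flux never has to be estimated.
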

 
 \subsection{Related works and strategy}
 
 \subsubsection{Absence of local anomalous dissipation}
 The starting point for our discussion is the work \cite{CFLS16} on $\mathbb{T}^2$, which has the initial vorticity $(\omega_{0}^{\nu})_{\nu>0}$ uniformily bounded in $L^{p}(\mathbb{T}^2)$ for some $1<p<\infty$. The strategy there for ruling out anomalous dissipation is to exploit the lack of vortex stretching for the 2D vorticity equation
 \begin{equation}\label{eq:vorteqn2Dintro}
 \partial_{t}\omega^{\nu}-\nu\Delta \omega^{\nu}+u^{\nu}\cdot\nabla \omega^{\nu}=0.
 \end{equation}
 In particular, in \cite{CFLS16} an $L^{p}$ energy estimate is performed giving that
 \begin{equation}\label{eq:vorticityLpineq}
 \||\omega^{\nu}|^{\frac{p}{2}}(\cdot,t)\|_{L^{2}(\mathbb{T}^2)}^2+\nu \int\limits_{0}^{t}\int\limits_{\mathbb{T}^2} |\nabla |\omega^{\nu}|^{\frac{p}{2}}|^2 dyds\leq C(p)\|\omega_{0}^{\nu}\|_{L^{p}(\mathbb{T}^2)}^p.
 \end{equation} 
 This is then combined with the Gagliardo-Nirenberg inequality and differential inequalities based on energy balance for $\|u^{\nu}(\cdot,t)\|_{L^{2}}^2$ and $\|\omega^{\nu}(\cdot,t)\|_{L^2}^2$ to infer the absence of anomalous dissipation
 $$\lim_{\nu\rightarrow 0}\nu\int\limits_{0}^{t}\int\limits_{\mathbb{T}^2} |\nabla u^{\nu}|^2 dyds=0.$$ For further results in this direction in the $\mathbb{T}^2$ setting, see \cite{LMP21}, \cite{LLJL25}, \cite{LL21}, \cite{DP25} and \cite{ELL25}, for example. See also \cite{DLP25} in the context of Hamiltonian conservation for the SQG equation.
 
 Let us now consider the setting of a smooth bounded domain $\Omega\subset\mathbb{R}^2$ and Navier-Stokes equations \eqref{eq:NSE} with no-slip on the boundary \eqref{eq:noslip}. In \cite[Theorem 3.1]{K17} trace considerations were used to show that, if $u^{\nu}$ converges to $u^{E}$ in a suitable sense, with $u^{E}$ not vanishing in the tangential direction to $\partial\Omega$, then, for $p>1$, the vorticity $\omega^{\nu}$ satisfies
 \begin{equation}\label{eq:vorticityLpblowup}
 \limsup_{v\rightarrow 0^+}\|\omega^{\nu}\|_{L^{\infty}(0,T; L^{p}(\Omega))}=\infty.
 \end{equation} 
 So, for 2D fluid flow in domains with boundaries, it seems that the previous energy approach using the vorticity equation \eqref{eq:vorteqn2Dintro} fails.
 
 In Theorem \ref{thm:nolocaanondis}, we assume $u^{\nu}$ has a uniform Onsager supercritical bound away from the boundary, namely \ref{eq:L1plusinfinity}, which grants a version of \eqref{eq:vorticityLpineq} in the interior of the domain. Recently, in the context of the inviscid limit problem, other criteria based on interior estimates have appeared in the literature, see for instance \cite{CV18}, \cite{CLLV19}, \cite{DN18}, \cite{BTW19} and \cite{SWW24}.
 
The assumed Onsager supercritical uniform bound \ref{eq:L1plusinfinity} plays the role of ensuring that, for any compact set $K\subset\Omega$, there exists $S(K)>0$ such that
\begin{equation}\label{eq:introL1+Linfinity}
\|u^{\nu}\|_{L^{1}(0,S(K); L^{\infty}(K))}\quad\textrm{ is sufficiently small},
\end{equation}
uniformly with respect to $\nu$. The condition \eqref{eq:introL1+Linfinity} gives a uniform control on the Lagrangian trajectories in the vanishing viscosity limit. Thus, one may interpret the assumption \ref{eq:L1plusinfinity} as limiting the speed at which singular structures can propagate from the boundary into a given interior region in the vanishing viscosity limit. Therefore for a short time, which degenerates as one approaches the boundary, one gains regularity estimates in the interior that are not influenced by activity on the boundary.

To exploit \eqref{eq:introL1+Linfinity} from a technical perspective, we use an idea in \cite{AD23} and \cite{CW23}, which are in the context of local Cacciopolli estimates for parabolic equations with divergence-free drifts and localized regularity criteria for the 3D Euler equations respectively.
 Namely, we use \eqref{eq:introL1+Linfinity} to suitably modulate the spatial variable of the vorticity equation \eqref{eq:vorteqn2Dintro}. As in \cite{AD23} and \cite{CW23}, \eqref{eq:introL1+Linfinity} and the modulation allows the local $L^p$ inequality for the vorticity to be transformed into a local $L^p$ inequality for the modulated vorticity, which is amenable to Gronwall's lemma and gives a version of \eqref{eq:vorticityLpineq} in the interior of the domain. With this in hand, we can then use the Gagliardo-Nirenberg inequality as in \cite{CFLS16} to conclude.
 \subsubsection{Local energy balance for physically realizable Euler solutions}
 For a  weak solution Euler solution $u^{E}$, the \textit{local} energy balance takes the form
 \begin{equation}\label{eq:localenergybalanceintro}
 \partial_{t}\Big(\frac{|u^{E}|^2}{2}\Big)+\textrm{div}\,\Big(u^{E}\Big(\frac{|u^{E}|^2}{2}+p^{E}\Big)\Big)=0
 \end{equation}
 in the sense of distributions. The study of local energy balance for weak Euler solutions was pioneered by J. Duchon and R. Robert in \cite{DR2000}. In the work \cite{CFLS16} it is shown that, for solutions $(u^{\nu}, p^{\nu})$ to the 2D Navier-Stokes equations on $\mathbb{T}^2$, with initial vorticity $\omega_{0}^{\nu}$ uniformly bounded in the Onsager supercritical space $L^{p}(\mathbb{T}^2)$ with $\frac{6}{5}<p<\frac{3}{2}$, the limit is a physically realizable Euler solution satisfying \eqref{eq:localenergybalanceintro}. To show this, the elliptic estimate in $\mathbb{T}^2$
 \begin{equation}\label{eq:CZpressureintro}
 p^{\nu}\sim |u^{\nu}|^2\quad\textrm{in}\quad L^{q}\quad 1<q<\infty
 \end{equation}
 is crucially used. For a domain $\Omega$ with boundary and if $u^{\nu}$ satisfies the no-slip boundary condition, the estimate \eqref{eq:CZpressureintro} is unavailable and analogous estimates are even false for solutions of the Stokes system \cite{KS02}. Other than certain symmetric settings \cite{LLMT08}, little appears to be known about the behavior of $p^{\nu}$ in the vanishing viscosity limit, in the case of boundaries and under the no-slip boundary condition. In this setting the  previous results on energy balance and anomalous dissipation for physically realizable solutions  (e.g \cite{BTW19} and \cite{DN18}) are based on additional assumptions for the pressure $p^{\nu}$ which are uniform in viscosity. 
 
Due to the aforementioned issues with the pressure in the setting with boundaries, we consider \ref{Q2} for the local Leray projection/large scale approximation of a physically realizable Euler solution $u^{E}$. This is formally\footnote{See \eqref{eq:locallerayprojectiondef} for the definition.} given by
\begin{equation}\label{eq:largescaleapproxintro}
\mathbb{P}_{\varphi}(f):=\nabla^{\perp} \Delta_{2D}^{-1}(\varphi\nabla^{\perp}\cdot f),
\end{equation}
where $\varphi$ is a suitable cut-off function, $\Delta_{2D}^{-1}$ is the operator given by convolution with the full plane Green's function $\frac{1}{2\pi}\log(|x|)$, and where $$\nabla^{\perp}=\begin{bmatrix} -\partial_{2}\\
\partial_{1} \end{bmatrix}.$$
This was introduced in \cite{PGLR18} and revisited in \cite{K23} in the context of the partial regularity theory for the 3D Navier-Stokes equations, yet has not been considered in the context of inviscid limits. The advantage is that, for smooth enough $p^{\nu}$, one has
$$\mathbb{P}_{\varphi}(\nabla p^{\nu}):=\nabla^{\perp} \Delta_{2D}^{-1}(\varphi\nabla^{\perp}\cdot \nabla p^{\nu})=0. $$
This ultimately means $\mathbb{P}_{\varphi}(u^{\nu})$ satisfies an equation not involving the pressure $p^{\nu}$, as observed in \cite{PGLR18} and \cite{K23}. Using the 2D analogue of the equation derived in \cite{K23}, we show that, in the setting of Theorem \ref{thm:localenergybalance}, the absence of local anomalous dissipation results in $\mathbb{P}_{\varphi}(u^E)$ satisfying a local energy balance equation. 

We achieve Theorem \ref{thm:localenergybalancecompactL3} by using  the aforementioned localized energy estimate for the modulated vorticity and arguments from \cite[Proposition 3.2, 4.1]{DP25dissconc}\footnote{These arguments from \cite{DP25dissconc} are in the context of the torus $\mathbb{T}^2$.}, which enable the dissipation to be estimated in terms of a $L^2$-based structure function of $u^{\nu}$  in the interior of the domain. Once we show that the dissipation vanishes locally, we then argue in the same way as Theorem \ref{thm:localenergybalance} to infer that $\mathbb{P}_{\varphi}(u^E)$ satisfies a local energy balance.
 \subsection{Further remarks}
 \begin{remark}\label{rmk:additionalforcing}[On forced extensions]
 It can be readily verified that the conclusion of Theorem \ref{thm:nolocaanondis} holds true with an additional forcing $F^{\nu}:\Omega\times (0,T)\rightarrow \mathbb{R}^2$ that satisfies the uniform bounds 
 \begin{equation}\label{eq:L2forceuniform} \tag{\textbf{Condition D}}
\sup_{\nu>0}\|F^{\nu}\|_{L^{1}(0,T;L^{2}(\Omega))}<\infty 
\end{equation}
and for all $r>0$
\begin{equation}\label{eq:forcecurluniform} \tag{\textbf{Condition E}}
\sup_{\nu>0}\|\nabla^{\perp}\cdot F^{\nu}\|_{L^{1}(0,T; L^p(\Omega(r)))}=Q(r)<\infty 
\end{equation}
The condition \ref{eq:forcecurluniform} changes the estimate \eqref{eq:aprioriest1anyp} to 
\begin{equation}\label{eq:aprioriest1anypforce}
\begin{split}
&\sup_{t\in (0,T)}\int\limits_{B(\frac{3R_0}{8})} |\omega^{\nu}(x,t)|^p dx+\nu\int\limits_{0}^{T}\int\limits_{B(\frac{3R_0}{8})} |\nabla|\omega^{\nu}|^{\frac{p}{2}}|^2 dxds\\
&\leq C(p)\Big(\int\limits_{B(R_0)} |\omega_{0}^{\nu}(x)|^p dx+\|\nabla^{\perp}\cdot F^{\nu}\|_{L^{1}(0,T; L^p(B(R_0)))}^p+\frac{\nu}{R_0^2}\int\limits_{0}^{T} \int\limits_{B(R_0)} |\omega^\nu|^p dxds\Big)
\end{split}
\end{equation}
with the  estimate \eqref{eq:aprioriest1} changing according to H\"{o}lder's inequality  applied to \eqref{eq:aprioriest1anypforce}. The forcing is also incorporated into the energy estimate \eqref{eq:energyequality}, giving
\begin{equation}\label{eq:energyequalityforce}
\sup_{t\in (0,T)}\|u^{\nu}(\cdot,t)\|_{L^{2}(\Omega)}^2+\nu\int\limits_{0}^{T}\int\limits_{\Omega}|\nabla u^{\nu}(x,s)|^2 dxds\leq C\|u_{0}^{\nu}\|_{L^{2}(\Omega)}^2+C\|F^{\nu}\|_{L^{1}(0,T;L^{2}(\Omega))}^2. 
\end{equation}
Using \ref{eq:L2iduniform}, \ref{eq:vortLpuniform} and \ref{eq:L2forceuniform}, \ref{eq:forcecurluniform}, we have that the estimates \eqref{eq:aprioriest1anypforce}-\eqref{eq:energyequalityforce} are uniform in $\nu$. The rest of the proof is unchanged compared to the (unforced) Theorem \ref{thm:nolocaanondis}.
 \end{remark}
 \begin{remark}\label{rmk:uniformboundassumption}[On the Onsager supercritical bound \ref{eq:L1plusinfinity}]
 We do not know if the assumption on the Navier-Stokes solutions \ref{eq:L1plusinfinity} holds unconditionally.
 Such a uniform bound is known to hold for the Stokes system in a bounded $C^3$ domain \cite{AG13} or an exterior domain in $\mathbb{R}^2$ \cite{A21} provided the initial data is uniformily bounded in $L^{\infty}(\Omega)$.
 \end{remark}
\begin{remark}\label{rmk:comparisonwithSWW24}[Regarding strong convergence of vorticity]
In \cite{SWW24}, strong convergence of the vorticity is shown under the assumption \ref{eq:L2iduniform}, together with the assumption that for $p\in (2,\infty)$ that $\omega_0^{\nu}\rightarrow \omega_0$ in $L^{p}_{loc}(\Omega)$ and that for every compact set $K\subset\Omega$
\begin{equation}\label{eq:SWWvortuniformilybounded}
\sup_{\nu>0}\|\omega^{\nu}\|_{L^{\infty}(0,T; L^{p}(K))}<\infty.
\end{equation}
For the case $p\in (2,\infty)$ and $\omega_0^{\nu}\rightarrow \omega_0$ in $L^{p}_{loc}(\Omega)$, we see that the assumption \ref{eq:L1plusinfinity} in Theorem \ref{thm:nolocaanondis} is strictly weaker than \eqref{eq:SWWvortuniformilybounded} by the Sobolev embedding theorem. 
In Theorem \ref{thm:nolocaanondis}, we show that these weaker assumptions imply the uniform vorticity bound \eqref{eq:thm1conclusionvortbound}. Consequently, in the setting of Theorem \ref{thm:nolocaanondis} with $p\in (2,\infty)$, we expect to also get the strong vorticity convergence proven in \cite{SWW24}.
\end{remark} 
\begin{remark}\label{rmk:localenergy balance}[Conditions giving the local energy balance and the 3D case]
If $\Omega\subset \mathbb{R}^3$,
\begin{equation}\label{rmk:strongL3loc}
u^{\nu}\rightarrow u^{E}\quad\textrm{in}\quad L^{3}(0,T; L^{3}_{loc}(\Omega))
\end{equation} 
and
\begin{equation}\label{rmk:absenceanon}
\lim_{\nu\rightarrow 0} \nu\int\limits_{0}^{T}\int\limits_{\Omega} \varphi |\nabla u^{\nu}|^2 dyds=0\quad\forall \varphi\in C^{\infty}_{0}(\Omega; [0,1]),
\end{equation}
then a similar conclusion to Theorem \ref{thm:localenergybalancecompactL3} holds. This can be achieved by utilizing the equations in 3D for the local Leray projector from \cite{K23}.
\end{remark}
\subsection{Notation}
\subsubsection{Universal constants, vectors and domains}
When a constant depends on a parameter (for example $p$), we will sometimes denote its dependence by $c(p)$. In this paper, universal constants will sometimes be denoted by $C_{univ}$ and $C$. These may change from line to line unless otherwise specified. Sometimes we will write $X\lesssim Y\leq Z$ to mean that there exists a positive universal constant $C$ such that $X\leq CY\leq CZ.$ Sometimes in this paper, we will write $X\lesssim_{p} Y$. This means that there exists a positive constant $C(p)$ such that $X\leq C(p)Y.$

For $x\in\mathbb{R}^{2}$ and $r>0$, 
we write $B(x,r):= \{y\in\mathbb{R}^2; |x-y|<r\}$. For $x=0$, we write $B(r)$ instead.
For $a,\, b\in\mathbb{R}^2$, we write $(a\otimes b)_{\alpha\beta}=a_\alpha b_\beta$, and for $A,\, B\in M_2(\mathbb{R})$, $A:B=A_{\alpha\beta}B_{\alpha\beta}$. Here and in the whole paper we use Einstein's convention on repeated indices.  For $F:\Omega\subseteq\mathbb{R}^2\rightarrow\mathbb{R}^2$, we define $\nabla F\in M_{2}(\mathbb{R})$ by $(\nabla F(x))_{\alpha\beta}:= \partial_{\beta} F_{\alpha}$. 

\subsubsection{Function spaces and norms} \label{ss:Function spaces}

For all $\mathcal{O}\subseteq \mathbb{R}^2$ open, $k\in \mathbb{N}$ and $1\leq p\leq \infty$, $W^{k,p}(\mathcal{O})$ denotes the Sobolev space with differentiability $k$ and integrability $p$. Furthermore, $L^{2}_{\sigma}(\mathcal{O})$ and $W^{1,2}_{0,\sigma}(\mathcal{O})$ denote the closure of smooth compactly supported divergence-free test functions, with respect to the $L^{2}(\mathcal{O})$ and $W^{1,2}(\mathcal{O})$ norms respectively.
  
If $X$ is a Banach space with norm $\|\cdot\|_{X}$, then $L^{s}(a,b;X)$, with $a<b$ and $s\in[1,\infty)$,  will denote the usual Banach space of strongly measurable $X$-valued functions $f(t)$ on $(a,b)$ such that
$$\|f\|_{L^{s}(a,b;X)}:=\left(\int\limits_{a}^{b}\|f(t)\|_{X}^{s}dt\right)^{1/s}<+\infty.$$ 
The usual modification is made if $s=\infty$. Let $C([a,b]; X)$ denote the space of continuous $X$ valued functions on $[a,b]$ with usual norm. In addition, let $C_{w}([a,b]; X)$ denote the space of $X$ valued functions, which are continuous from $[a,b]$ to the weak topology of $X$. 

Let $a,b\in\mathbb{R}$. Sometimes we will denote $L^{p}(a,b; L^{q}(\mathcal{O}))$ and $L^{p}(a,b; W^{k,q}(\mathcal{O}))$ by $L^{p}_{t}L^{q}_{x}(\mathcal{O}\times (a,b))$ and $L^{p}_{t}W^{k,q}_{x}(\mathcal{O}\times (a,b)).$ For the case $p=q$ we will often write $L^{p}_{x,t}(\mathcal{O}\times (a,b))$ to denote $L^{p}(a,b; L^{p}(\mathcal{O}))$.

\subsubsection{Solution concepts}
\begin{definition}\label{Lerays}[Leray-Hopf solution]
Let $\Omega\subset\mathbb{R}^2$ be a bounded domain.
We say that $u$ is a \textit{Leray-Hopf solution} to the Navier-Stokes equations \eqref{eq:NSE}-\eqref{eq:noslip} on $\Omega\times (0,\infty)$
  if 
\begin{itemize}
\item $u\in C_{w}([0,\infty); L^{2}_{\sigma}(\Omega))\cap L^{2}(0,\infty; W^{1,2}_{0,\sigma}(\Omega))$, 
\item $u$ satisfies 
 \begin{equation}\label{eq:2dNSweakform}
 \begin{split}
 &\int\limits_{0}^{\infty}\int\limits_{\Omega} u\cdot\partial_{t}\theta + \nu u\cdot\Delta\theta + u\otimes u:\nabla \theta\, dxdt=0\\
 &\textrm{for all divergence-free}\quad  \theta\in C^{\infty}_{0}(\Omega\times (0,\infty)),
 \end{split}
 \end{equation}
\item $u$ satisfies the global energy inequality
\begin{equation}\label{energyinequalityLeray}
\|u(\cdot,t)\|_{L^{2}(\Omega)}^2+2\nu\int\limits_{0}^{t}\int\limits_{\Omega}|\nabla u|^2 dyds\leq \|u(\cdot,0)\|_{L^{2}(\Omega)}^2\,\,\,\,\textrm{for}\,\,\textrm{all}\,\,t\in [0,\infty).
\end{equation}
\end{itemize}
\end{definition}
Let us recall the standard definition of a weak solution to the 2D Euler equations.
 \begin{definition}\label{def:weaksol}[weak solution of 2D Euler]
 
 Let $\Omega\subset\mathbb{R}^2$ be a bounded domain. We say 
$u^{E}\in L^{2}(0,T;L^{2}(\Omega))$ is a weak solution of the 2D Euler equation in $\Omega\times (0,T)$ if
 \begin{equation}\label{eq:2deulerweakform}
 \int\limits_{0}^{T}\int\limits_{\Omega} u^{E}\cdot\partial_{t}\theta + u^{E}\otimes u^{E}:\nabla \theta\, dxdt=0\quad  \textrm{for all divergence-free}\quad  \theta\in C^{\infty}_{0}(\Omega\times (0,T)).
 \end{equation}
 \end{definition}
\section{Absence of local anomalous dissipation}
\subsection{A priori estimates}
\begin{proposition}\label{prop:apriori1}
Let $\nu>0$, $1<p<\infty$, $R_0>0$ and $T>0$.
Let 
\begin{equation}\label{eq:initialvort1}
 \omega_{0}^{\nu}\in L^{p}(B(R_0)).   
\end{equation}
Suppose $u^{\nu}:B(R_0)\times (0,\infty)\rightarrow \mathbb{R}^2$ is smooth and divergence-free and that $\omega^{\nu}:B(R_0)\times (0,\infty)\rightarrow\mathbb{R}$ is a smooth function satisfying
\begin{equation}\label{eq:vort1}
\partial_{t}\omega^{\nu}-\nu\Delta \omega^{\nu}+u^{\nu}\cdot\nabla \omega^{\nu}=0,\quad\omega^{\nu}(x,0)=\omega_{0}^{\nu}(x)\quad\textrm{in}\quad B(R_0)\times (0,\infty),
\end{equation}
with 
\begin{equation}\label{eq:vortcontinuityLp}
\lim_{t\rightarrow 0^+}\|\omega^{\nu}(\cdot,t)-\omega_{0}^{\nu}\|_{L^{p}(B(R_0))}=0
\end{equation}
and
\begin{equation}\label{eq:bddtotalspeedsmall}
\|u^{\nu}\|_{L^{1}(0,T; L^{\infty}(B(R_0)))}<\frac{R_0}{8}.
\end{equation}
Then, under the above assumptions the following apriori estimate holds:
\begin{equation}\label{eq:aprioriest1anyp}
\begin{split}
&\sup_{t\in (0,T)}\int\limits_{B(\frac{3R_0}{8})} |\omega^{\nu}(x,t)|^p dx+\nu\int\limits_{0}^{T}\int\limits_{B(\frac{3R_0}{8})} |\nabla|\omega^{\nu}|^{\frac{p}{2}}|^2 dxds\\
&\leq C(p)\Big(\int\limits_{B(R_0)} |\omega_{0}^{\nu}(x)|^p dx+\frac{\nu}{R_0^2}\int\limits_{0}^{T} \int\limits_{B(R_0)} |\omega^\nu|^p dxds\Big).
\end{split}
\end{equation}
Moreover, when $1<p\leq 2$ the following estimate holds:
\begin{equation}\label{eq:aprioriest1}
\begin{split}
&\sup_{t\in (0,T)}\int\limits_{B(\frac{3R_0}{8})} |\omega^{\nu}(x,t)|^p dx+\nu\int\limits_{0}^{T}\int\limits_{B(\frac{3R_0}{8})} |\nabla|\omega^{\nu}|^{\frac{p}{2}}|^2 dxds\\
&\leq C(p)\Big(\int\limits_{B(R_0)} |\omega_{0}^{\nu}(x)|^p dx+\frac{(\nu T)^{1-\frac{p}{2}}}{R_0^{p}}\Big(\nu\int\limits_{0}^{T} \int\limits_{B(R_0)} |\omega^\nu|^2 dxds\Big)^{\frac{p}{2}}\Big).
\end{split}
\end{equation}
\end{proposition}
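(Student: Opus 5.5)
The plan is to follow the modulation idea of \cite{AD23}, \cite{CW23}. Let $X(t)$ solve $\dot X(t)=\eta(t)\,u^{\nu}(X(t),t)$ with $X(0)=0$, where $\eta$ is arranged so that $X$ stays in $B(R_0/8)$. This is automatic from \eqref{eq:bddtotalspeedsmall}: as long as $X(t)\in B(R_0)$ we have $|X(t)|\le \int_0^t\|u^\nu\|_{L^\infty(B(R_0))}ds<R_0/8$. So we can simply take $\dot X=u^\nu(X,t)$; if the trajectory were to exit we would stop it, but the bound shows it never does on $(0,T)$.

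Next fix a cut-off $\phi\in C^\infty_0(B(R_0/4))$ with $\phi=1$ on $B(R_0/4)\cap\{|y|<\tfrac{3R_0}{8}-\tfrac{R_0}{8}\}$. More precisely, take $\phi=1$ on $B(R_0/2-R_0/8)$... To keep the radii consistent, choose $\phi\in C_0^\infty(B(\tfrac{3R_0}{4}))$ with $\phi=1$ on $B(\tfrac{R_0}{2})$, $|\nabla\phi|\lesssim R_0^{-1}$, $|\nabla^2\phi|\lesssim R_0^{-2}$, and use the moving weight $\psi(x,t)=\phi(x-X(t))$. Since $|X|<R_0/8$, we get $\psi=1$ on $B(\tfrac{3R_0}{8})$ and $\supp\psi(\cdot,t)\subset B(\tfrac{7R_0}{8})\subset B(R_0)$.

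The main computation multiplies \eqref{eq:vort1} by $p|\omega^\nu|^{p-2}\omega^\nu\psi$, using the standard regularization $(|\omega|^2+\delta)^{p/2}$ when $p<2$, and integrates in space. Using $\mathrm{div}\,u^\nu=0$ this gives
\begin{equation*}
\frac{d}{dt}\int|\omega^\nu|^p\psi+\nu c(p)\int|\nabla|\omega^\nu|^{\frac p2}|^2\psi=\int|\omega^\nu|^p\big(\partial_t\psi+u^\nu\cdot\nabla\psi+\nu\Delta\psi\big).
\end{equation*}
The key cancellation is
\begin{equation*}
\partial_t\psi+u^\nu\cdot\nabla\psi=(u^\nu(x,t)-u^\nu(X(t),t))\cdot\nabla\phi(x-X).
\end{equation*}
This term is not small pointwise, which is the main obstacle. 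With the plain choice $\dot X=u^\nu(X,t)$ it is bounded by $2\|u^\nu(t)\|_{L^\infty(B(R_0))}R_0^{-1}\int_{\supp\nabla\psi}|\omega|^p$, which leads to a Gronwall factor $\exp(CR_0^{-1}\|u\|_{L^1L^\infty})\le e^{C/8}$. However, this term lives on $\supp\nabla\psi$, not inside the region already controlled.

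To close, I would instead choose $\phi$ radial and nonincreasing, and drop the $X$-drift subtraction altogether: set $\psi(x,t)=\Phi(|x|+\int_0^t\|u^\nu(s)\|_{L^\infty(B(R_0))}ds)$ with $\Phi$ nonincreasing. Then
\begin{equation*}
\partial_t\psi+u^\nu\cdot\nabla\psi=\Phi'\big(\|u^\nu\|_{L^\infty}+u^\nu\cdot\tfrac{x}{|x|}\big)\le0,
\end{equation*}
so the transport term has a good sign and can be discarded. This is the cleanest version of the modulation. Take $\Phi=1$ on $[0,\tfrac{R_0}{2}]$ and $\Phi=0$ beyond $\tfrac{3R_0}{4}$. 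Since the shift $\int_0^t\|u^\nu\|_{L^\infty}<\tfrac{R_0}{8}$, we still have $\psi=1$ on $B(\tfrac{3R_0}{8})$ and $\supp\psi\subset B(R_0)$. Integrating in time and using \eqref{eq:vortcontinuityLp} at $t=0$, together with $\psi\le1$ and $|\Delta\psi|\lesssim R_0^{-2}$ (note $\Phi'$ vanishes near $0$, so there is no singularity at the origin), yields \eqref{eq:aprioriest1anyp}.

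For \eqref{eq:aprioriest1} with $1<p\le2$, estimate the last term by H\"older in space-time:
\begin{equation*}
\frac{\nu}{R_0^2}\int_0^T\!\!\int_{B(R_0)}|\omega|^p\le\frac{\nu}{R_0^2}(T|B(R_0)|)^{1-\frac p2}\Big(\int_0^T\!\!\int_{B(R_0)}|\omega|^2\Big)^{\frac p2}.
\end{equation*}
Using $|B(R_0)|^{1-p/2}\sim R_0^{2-p}$, the right-hand side equals $C\,\frac{(\nu T)^{1-\frac p2}}{R_0^{p}}\big(\nu\int\!\!\int|\omega|^2\big)^{\frac p2}$, which is exactly the claimed form.
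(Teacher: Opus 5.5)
Your final argument (the shrinking cutoff $\psi=\Phi(|x|+\int_0^t\|u^{\nu}\|_{L^\infty(B(R_0))}ds)$) is correct. It reaches \eqref{eq:aprioriest1anyp} by a more elementary route than the paper, although the underlying mechanism is the same.

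The paper rescales space, setting $\tilde\omega^{\nu}(y,t)=\omega^{\nu}(\lambda(t)y,t)$ with $\dot\lambda=-2\|u^{\nu}(\cdot,t)\|_{L^\infty(B(R_0))}/R_0$ and $\lambda(0)=1$. The modified drift $\tilde u^{\nu}=u^{\nu}(\lambda y,t)-\dot\lambda y$ then points outward on the annulus $R_0/2\le|y|<R_0$, so $\tilde u^{\nu}\cdot\nabla\varphi\le0$ for a fixed radial cutoff $\varphi$. The price is that $\tilde u^{\nu}$ is no longer divergence-free: $\nabla\cdot\tilde u^{\nu}=-2\dot\lambda\ge0$. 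This produces the term $\frac{d}{ds}\log(\lambda^{-2})\int|\tilde\omega^{\nu}|^p\varphi$, which the paper absorbs by Gronwall, at the cost of the factor $\lambda^{-2}\le 16/9$.

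You instead stay in the original variables and let the cutoff itself move inward at the maximal speed. This gives $\partial_t\psi+u^{\nu}\cdot\nabla\psi\le0$ directly, while $u^{\nu}$ stays divergence-free, so no Gronwall step is needed. Undoing the paper's change of variables shows the two are the same idea:
$\int|\tilde\omega^{\nu}(y,t)|^p\varphi(y)\,dy=\lambda^{-2}\int|\omega^{\nu}(x,t)|^p\varphi(x/\lambda(t))\,dx.$
So the paper is using the dilation-shrinking cutoff $\varphi(x/\lambda(t))$ where you use a radially translated one, and its Gronwall factor is just the Jacobian $\lambda^{-2}$. Your version gives a cleaner computation, including the simple check that $\Phi'\neq0$ forces $|x|>3R_0/8$, hence $|\Delta\psi|\lesssim R_0^{-2}$. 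The paper's version follows \cite{AD23}, \cite{CW23} literally. Your H\"older step for \eqref{eq:aprioriest1} is identical to the paper's.

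In a write-up you should do three things.
\begin{itemize}
\item Delete the abandoned Lagrangian-translation attempt and the garbled first choice of $\phi$, since the final argument does not use them.
\item State that $\Phi$ satisfies $|\Phi'|\lesssim R_0^{-1}$ and $|\Phi''|\lesssim R_0^{-2}$.
\item Note that $t\mapsto\int_0^t\|u^{\nu}\|_{L^\infty}ds$ is only absolutely continuous, so your differential identity holds for a.e.\ $t$. The same caveat applies to the paper's $\lambda$.
\end{itemize}
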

\begin{proof}
In proving the above Proposition, we follow arguments from \cite{AD23} and \cite{CW23}, which are in the context of local Cacciopolli estimates for parabolic equations with divergence-free drifts and localized regularity criteria for the 3D Euler equations respectively.
Following \cite{AD23} and \cite{CW23}, we define the modulation parameter $\lambda:[0,T]\rightarrow [0,\infty)$ by
\begin{equation}\label{eq:modulation}
\dot{\lambda}(t)=-\frac{2\|u^{\nu}(\cdot,t)\|_{L^{\infty}(B(R_0))}}{R_0},\qquad \lambda(0)=1.
\end{equation} 
From the assumption \eqref{eq:bddtotalspeedsmall} we see that
\begin{equation}\label{eq:modulationbounds}
\frac{3}{4}\leq\lambda(t)\leq 1\quad\forall t\in [0,T].
\end{equation}
Next define the functions $\tilde{\omega}^{\nu}:B(R_0)\times (0,T)\rightarrow\mathbb{R}$ and $\tilde{u}^{\nu}:B(R_0)\times (0,T)\rightarrow\mathbb{R}^2$ by
\begin{equation}\label{eq:tildeuomegadef}
\tilde{\omega}^{\nu}(y,t):= \omega^{\nu}(\lambda(t)y, t)\quad\textrm{and}\quad \tilde{u}^{\nu}(y,t)=u^{\nu}(\lambda(t)y,t)-\dot{\lambda}(t)y.
\end{equation}
Using \eqref{eq:vort1} gives that $\tilde{\omega}^{\nu}$ satisfies
\begin{equation}\label{eq:tildevorteq2}
\partial_{t}\tilde{\omega}^{\nu}-\frac{\nu}{\lambda(t)^2}\Delta_{y} \tilde{\omega}^{\nu}+\frac{\tilde{u}^{\nu}}{\lambda(t)}\cdot\nabla_{y} \tilde{\omega}^{\nu}=0,\quad\tilde{\omega}^{\nu}(y,0)=\omega_{0}^{\nu}(y)\quad\textrm{in}\quad B(R_0)\times (0,T).
\end{equation}
From the fact that $u^{\nu}$ is divergence-free, we see from \eqref{eq:tildeuomegadef} that in $B(R_0)\times (0,T)$.
\begin{equation}\label{eq:utildenudiv}
\nabla\cdot\tilde{u}^{\nu}(y,s)=-2\dot{\lambda}(s)=\frac{4\|u^{\nu}(\cdot,s)\|_{L^{\infty}(B(R_0))}}{R_0}.
\end{equation}
Arguing as in \cite{AD23} we have
\begin{equation}\label{eq:utilderadial}
\begin{split}
&\tilde{u}^{\nu}(y,s)\cdot\frac{y}{|y|}=u^{\nu}(\lambda(t)y,t)\cdot\frac{y}{|y|}+\frac{2|y|\|u^{\nu}(\cdot,t)\|_{L^{\infty}(B(R_0)}}{R_0}\geq 0\\
&\textrm{for}\quad (y,s)\in B(R_0)\setminus B(\tfrac{R_0}{2})\times (0,T).
\end{split}
\end{equation}
Let $\phi:\mathbb{R}\rightarrow [0,1]$ be a smooth even function that is decreasing on $[0,\infty)$  with
\begin{itemize}
\item $\phi(r)=1$ on $[-\frac{R_0}{2},\frac{R_0}{2}]$,
\item $\supp\phi\subset [-R_0, R_0]$ and
\item $|\nabla^{k}\phi(r)|\lesssim_{k} R_0^{-k}\quad\forall k\in\mathbb{N}.$
\end{itemize}
Define the cut-off function $\varphi:\mathbb{R}^2\rightarrow \mathbb{R}$
\begin{equation}\label{eq:cutoffdefprop1}
\varphi(y):=\phi(|y|).
\end{equation}
Then from \eqref{eq:utilderadial} and the decreasing property of $\phi$, we have that
\begin{equation}\label{eq:weakdivergencesign}
\begin{split}
&\tilde{u}^{\nu}(y,s)\cdot\nabla\varphi(y)=\tilde{u}^{\nu}(y,s)\cdot\frac{y}{|y|}\phi'(|y|)\leq 0\\
&\textrm{for}\quad (y,s)\in B(R_0)\setminus B(\tfrac{R_0}{2})\times (0,T).
\end{split}
\end{equation}
Multiplying \eqref{eq:tildevorteq2} by $|\tilde{\omega}^{\nu}|^{p-2}\tilde{\omega}^{\nu}\varphi$, integrating and integrating by parts gives that for $t\in (0,T)$
\begin{equation}\label{eq:localLpvort}
\begin{split}
&\int\limits_{B(R_0)} |\tilde{\omega}^{\nu}(y,t)|^p\varphi(y) dy+\frac{4\nu(p-1)}{p}\int\limits_{0}^{t}\int\limits_{B(R_0)}\frac{\varphi(y)|\nabla|\tilde{\omega}^{\nu}|^{\frac{p}{2}}|^2 (y,s)}{(\lambda(s))^2} dyds\\
&=\int\limits_{0}^{t}\int\limits_{B(R_0)}\frac{1}{\lambda(s)}\Big(\nabla\cdot \tilde{u}^{\nu}(y,s)|\tilde{\omega}^{\nu} (y,s)|^p\varphi(y)+\tilde{u}^{\nu}(y,s)\cdot\nabla\varphi(y) |\tilde{\omega}^{\nu} (y,s)|^p\Big) dyds\\
&+\int\limits_{B(R_0)} |{\omega_0^{\nu}}(y)|^p\varphi(y)dy+\nu\int\limits_{0}^{t}\int\limits_{B(R_0)}\frac{\Delta\varphi(y)|\tilde{\omega}^{\nu}(y,s)|^{p}}{(\lambda(s))^2} dyds.
\end{split}
\end{equation}
Using this, \eqref{eq:utildenudiv} and \eqref{eq:weakdivergencesign}, as well as using \eqref{eq:modulationbounds} and properties of the cut-off function, yields that for $t\in (0,T)$
\begin{equation}\label{eq:localLpvortinequality}
\begin{split}
&\int\limits_{B(R_0)} |\tilde{\omega}^{\nu}(y,t)|^p\varphi(y) dy+\frac{4\nu(p-1)}{p}\int\limits_{0}^{t}\int\limits_{B(R_0)}\frac{\varphi(y)|\nabla|\tilde{\omega}^{\nu}|^{\frac{p}{2}}|^2 (y,s)}{(\lambda(s))^2} dyds\\
&\leq \int\limits_{0}^{t}\frac{d}{ds}\log\Big(\frac{1}{(\lambda(s))^2}\Big)\int\limits_{B(R_0)}|\tilde{\omega}^{\nu} (y,s)|^p\varphi(y)dyds+\int\limits_{B(R_0)} |{\omega_0^{\nu}}(y)|^pdy\\
&+\frac{C\nu}{R_0^2}\int\limits_{0}^{t}\int\limits_{B(R_0)}|{\omega}^{\nu}(y,s)|^{p} dyds.
\end{split}
\end{equation}
By applying Gronwall's lemma to this, we obtain that for $t\in (0,T)$
\begin{equation}\label{eq:omegatildegronwall}
\begin{split}
&\int\limits_{B(R_0)} |\tilde{\omega}^{\nu}(y,t)|^p\varphi(y) dy\leq \frac{1}{(\lambda(t))^2}\Big(\int\limits_{B(R_0)} |{\omega_0^{\nu}}(y)|^p dy+\\
&+\frac{C\nu}{R_0^2}\int\limits_{0}^{t}\int\limits_{B(R_0)}|{\omega}^{\nu}(y,s)|^{p} dyds\Big)\\
&\lesssim \int\limits_{B(R_0)} |{\omega_0^{\nu}}(y)|^p dy+\frac{\nu}{R_0^2}\int\limits_{0}^{t}\int\limits_{B(R_0)}|{\omega}^{\nu}(x,s)|^{p} dxds.
\end{split}
\end{equation}
Using  \eqref{eq:modulationbounds} and properties of the cut-off function,  also yields that for $t\in (0,T)$
\begin{equation}\label{eq:vortrelation}
\begin{split}
&\int\limits_{B(\frac{3R_0}{8})}|\omega^{\nu}(x,t)|^p dx+\nu \int\limits_{0}^{t}\int\limits_{B(\frac{3R_0}{8})} |\nabla|\omega^{\nu}|^{\frac{p}{2}}|^2 dxds\lesssim_{p}\\
&\int\limits_{B(R_0)} |\tilde{\omega}^{\nu}(y,t)|^p\varphi(y) dy+\frac{4\nu(p-1)}{p}\int\limits_{0}^{t}\int\limits_{B(R_0)}\frac{\varphi(y)|\nabla|\tilde{\omega}^{\nu}|^{\frac{p}{2}}|^2 (y,s)}{(\lambda(s))^2} dyds.
\end{split}
\end{equation}
Combining this with \eqref{eq:localLpvortinequality}-\eqref{eq:omegatildegronwall} gives 
\begin{equation}\label{eq:aprioriestvortLp}
\begin{split}
&\sup_{t\in (0,T)}\int\limits_{B(\frac{3R_0}{8})} |\omega^{\nu}(x,t)|^p dx+\nu\int\limits_{0}^{T}\int\limits_{B(\frac{3R_0}{8})} |\nabla|\omega^{\nu}|^{\frac{p}{2}}|^2 dxds\\
&\leq C(p)\Big(\int\limits_{B(R_0)} |\omega_{0}^{\nu}(x)|^p dx+\frac{\nu}{R_0^2}\int\limits_{0}^{T} \int\limits_{B(R_0)} |\omega^\nu|^p dxds\Big).
\end{split}
\end{equation}
We then obtain \eqref{eq:aprioriest1} by applying H\"{o}lder's inequality to the second integrand on the right-hand-side of the above inequality.
\end{proof}
Next, we show how the apriori estimates in Proposition \ref{prop:apriori1}, together with arguments from \cite{DP25dissconc}, imply that the dissipation can be estimated in terms of an $L^2$-based structure function away from the boundary. This will play a key role in the proof of Theorem \ref{thm:localenergybalancecompactL3}.
\begin{proposition}\label{prop:aprioriest1.1}
Suppose that $u^{\nu}:B(R_0)\times (0,\infty)\rightarrow \mathbb{R}^2$ is a smooth solution to the 2D Navier-Stokes equations on $B(R_0)\times (0,\infty)$. Moreover, suppose $u^{\nu}$ satisfies the assumption \eqref{eq:bddtotalspeedsmall} on $B(R_0)\times (0,T)$.
Then, the following apriori estimate holds for 
$0<\delta\leq T\quad\textrm{and}\quad 0<\nu< (\frac{5R_0}{8})^{2}: $
\begin{equation}\label{eq:aprioriest1.1}
\begin{split}
&\nu\int\limits_{\delta}^{T} \int\limits_{B(\frac{R_0}{8})} |\nabla u^{\nu}|^2 dxds\lesssim\Big(1+\frac{\nu}{R_0^2}+\frac{1}{\delta}+\frac{\nu T}{\delta R_0^2}\Big)^{\frac{1}{2}}\times\\
 &\Big(\nu\int\limits_{\frac{\delta}{2}}^{T} \int\limits_{B(R_0)} |\nabla u^{\nu}|^2 dxds\Big)^{\frac{1}{2}}\Big(\int\limits_{{\delta}}^{T}\dashint_{B(\nu^{\frac{1}{2}})}\int_{B(\frac{3 R_0}{8})}|u^{\nu}(x-y,s)-u^{\nu}(x,s)|^2 dxdyds\Big)^{\frac{1}{2}}.
\end{split}
\end{equation}
\end{proposition}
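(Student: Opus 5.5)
We follow the structure-function argument of \cite[Proposition 3.2, 4.1]{DP25dissconc}, localized, with Proposition \ref{prop:apriori1} (case $p=2$) supplying the interior vorticity control. In 2D, $\int|\nabla u|^2$ is comparable to $\int|\omega|^2$ up to localization errors. So the idea is to split $\omega^\nu$ into a mollified part and a remainder at scale $\ell=\nu^{1/2}$. The mollified part is expressed through $u^\nu$ and the structure function. The remainder is controlled by $\ell\|\nabla\omega^\nu\|_{L^2}$, which is what the $\nu$-weighted dissipation term in \eqref{eq:aprioriest1anyp} controls, since $\nu^{1/2}\cdot\nu^{1/2}=\nu$.

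\textbf{Step 1 (local enstrophy bound from later times).} Apply Proposition \ref{prop:apriori1} with $p=2$ on the time interval $[t_0,T]$ for $t_0\in(\delta/2,\delta)$, using time translation; assumption \eqref{eq:bddtotalspeedsmall} is preserved. This gives
\begin{equation*}
\sup_{t\in(t_0,T)}\int_{B(\frac{3R_0}{8})}|\omega^\nu(t)|^2+\nu\int_{t_0}^T\int_{B(\frac{3R_0}{8})}|\nabla\omega^\nu|^2\lesssim\int_{B(R_0)}|\omega^\nu(t_0)|^2+\frac{\nu}{R_0^2}\int_{t_0}^T\int_{B(R_0)}|\omega^\nu|^2 .
\end{equation*}
Averaging in $t_0$ over $(\delta/2,\delta)$ and using $|\omega|\le|\nabla u|$ bounds the right side by
\begin{equation*}
\Big(\frac1\delta+\frac{1}{R_0^2}\Big)\int_{\delta/2}^T\int_{B(R_0)}|\nabla u^\nu|^2 .
\end{equation*}
After multiplying by $\nu$, this becomes
\begin{equation*}
\Big(\frac{1}{\delta}+\frac{\nu}{R_0^2}+\cdots\Big)\,\nu\!\int_{\delta/2}^T\!\!\int_{B(R_0)}|\nabla u^\nu|^2 .
\end{equation*}
The term $\frac{\nu T}{\delta R_0^2}$ arises from bounding the $\frac{\nu}{R_0^2}\int_{t_0}^T$ term in the same averaged fashion. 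I expect the precise bookkeeping of the prefactor $(1+\frac{\nu}{R_0^2}+\frac1\delta+\frac{\nu T}{\delta R_0^2})$ to come from exactly these two sources, plus a harmless $1$.

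\textbf{Step 2 (dissipation via vorticity and a cutoff).} Take a cutoff $\chi$ equal to $1$ on $B(R_0/8)$ and supported in $B(R_0/4)$. Write
\begin{equation*}
\nu\int\chi^2|\nabla u|^2\lesssim \nu\int\chi^2|\omega|^2+\nu\int|\nabla\chi|^2|u|^2 .
\end{equation*}
This follows from $\int|\nabla(\chi u)|^2=\int|\operatorname{curl}(\chi u)|^2+|\operatorname{div}(\chi u)|^2$. It is cleaner, however, to bypass this and work directly with $\nu\int\chi^2\nabla u:\nabla u$. Write $\nabla u=\nabla u_\ell+\nabla(u-u_\ell)$ with $u_\ell=u*\rho_\ell$, $\ell=\nu^{1/2}$. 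Then:
\begin{itemize}
\item For the mollified part, $\nu\int\chi^2\nabla u:\nabla u_\ell$, move the derivative onto the mollifier. Since $\int\nabla\rho_\ell=0$, we have $|\nabla u_\ell(x)|\lesssim\ell^{-1}\dashint_{B(\ell)}|u(x-y)-u(x)|\,dy$. Cauchy--Schwarz then gives
\begin{equation*}
\nu\,\ell^{-1}\|\chi\nabla u\|_{L^2}\Big(\dashint_{B(\ell)}\int_{B(\frac{3R_0}{8})}|\delta_yu|^2\Big)^{1/2}=\nu^{1/2}\|\chi\nabla u\|_{L^2}\,S^{1/2}.
\end{equation*}
\item For the remainder, use $\|u-u_\ell\|$-type bounds. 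The quantity $\nabla(u-u_\ell)$ on the support of $\chi$ is bounded in $L^2$ by $\ell\|\nabla^2u\|_{L^2(B(3R_0/8))}\lesssim\ell(\|\nabla\omega\|+R_0^{-1}\|\nabla u\|)$, by a local elliptic (Biot--Savart/Caccioppoli) estimate. Alternatively, and better, it is bounded by $\ell^{-1}S^{1/2}$ if we instead factor as $\nabla(u-u_\ell)$ paired against $\nabla u$ with the structure function on the difference.
\end{itemize}
I would aim for the form: dissipation $\le(\nu\|\nabla^2 u\|^2\text{-type quantity})^{1/2}\cdot S^{1/2}$. This comes from integrating by parts, $\nu\int\chi^2|\nabla u|^2=-\nu\int\chi^2 u\cdot\Delta u+\text{cutoff}$, and then replacing $u$ by $u-u(\cdot-y)$ averaged over $y\in B(\ell)$. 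The averaged shift term, $\nu\int\chi^2u(\cdot-y)\cdot\Delta u$, is controlled by moving one derivative and costing $\ell$, which produces exactly $\ell^{-1}\nu\cdot\ell$ balances.

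\textbf{Step 3 (combine).} We get
\begin{equation*}
\nu\int_\delta^T\int_{B(R_0/8)}|\nabla u|^2\lesssim\Big(\nu\int_\delta^T\int_{B(\frac{3R_0}{8})}|\nabla\omega|^2+\cdots\Big)^{1/2}S^{1/2}.
\end{equation*}
Inserting Step 1 then yields \eqref{eq:aprioriest1.1}. The condition $\nu<(5R_0/8)^2$ ensures that shifts by $|y|<\nu^{1/2}$ keep $B(3R_0/8)-y$ inside $B(R_0)$.

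The main obstacle is Step 2: converting $\Delta u=\nabla^\perp\omega$ so that only the local vorticity gradient, and not a global elliptic estimate, appears. The cutoff commutator terms must also be absorbed into the prefactor rather than producing uncontrolled $\|u\|_{L^2}$ terms. I would first try writing $\Delta u=\nabla^\perp\omega$ directly, which is exact for divergence-free fields with no elliptic inversion, and putting all cutoff derivatives on $\chi$, bounded by $R_0^{-1}$.
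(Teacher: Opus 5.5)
Your plan is essentially the paper's proof. Your Step 1 (the $p=2$ local vorticity estimate restarted at $t_0$ and averaged over $t_0\in(\delta/2,\delta)$) is an equivalent variant of the paper's step of multiplying by $\nu$, integrating over $t\in(\delta/2,T)$ and applying Fubini, and it even gives $\frac{\nu}{R_0^2}$ in place of $\frac{\nu T}{\delta R_0^2}$. The route you finally settle on in Step 2 is exactly what the paper does: split $\nabla u:\nabla u=\nabla u:\nabla(u-u_\ell)+\nabla u:\nabla u_\ell$ with $\ell=\nu^{1/2}$, integrate by parts only the first piece using $-\Delta u=(\partial_2\omega,-\partial_1\omega)$, and apply H\"older with $\|u-u_\ell\|_{L^2}^2+\nu\|\nabla u_\ell\|_{L^2}^2\lesssim S$. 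Done this way, the cutoff term is $\nu\int\nabla u:(u-u_\ell)\otimes\nabla\varphi$, so your worry about uncontrolled $\|u\|_{L^2}$ terms disappears. Drop the aside claiming $\|\nabla(u-u_\ell)\|_{L^2}\lesssim\ell^{-1}S^{1/2}$: it is false in general (together with $\|\nabla u_\ell\|_{L^2}\lesssim\ell^{-1}S^{1/2}$ it would give $\nu\|\nabla u\|_{L^2}^2\lesssim S$ outright), and it is not needed.
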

\begin{proof}
The proof combines the estimates of Proposition \ref{prop:apriori1} with arguments present in \cite[Proposition 3.2]{DP25dissconc} and \cite[Proposition 4.1]{DP25dissconc}. 
Below we will let $u^{\nu}_{\nu^{\frac{1}{2}}}$ be the mollification of $u^{\nu}$:
\begin{equation}\label{eq:umollifydef}
u^{\nu}_{\nu^{\frac{1}{2}}}(x,s):=\frac{1}{\nu}\int\limits_{B(\nu^{\frac{1}{2}})} \rho\Big(\frac{y}{\nu^{\frac{1}{2}}}\Big) u^{\nu}(x-y,s) dy\quad\textrm{for}\quad (x,s)\in B\Big(\frac{3R_0}{8}\Big)\times(0,T),
\end{equation}
where $\rho$ is a standard mollifier.
We will also use the following estimates that follow by classical estimates for mollified functions, see \cite[Section 2.1]{DP25dissconc} and \cite[(4.4)]{DP25dissconc}. Namely, for $s\in (0,T)$
\begin{equation}\label{eq:umollifierestL2}
\|u^{\nu}_{\nu^{\frac{1}{2}}}(\cdot,s)-u^{\nu}(\cdot,s)\|_{L^{2}(B(\frac{3R_0}{8}))}^2\lesssim \dashint_{B(\nu^{\frac{1}{2}})}\int_{B(\frac{3 R_0}{8})}|u^{\nu}(x-y,s)-u^{\nu}(x,s)|^2 dxdy,
\end{equation}
\begin{equation}\label{eq:umollifierestgradL2}
\nu\|\nabla u^{\nu}_{\nu^{\frac{1}{2}}}(\cdot,s)\|_{L^{2}(B(\frac{3R_0}{8}))}^2 \lesssim \dashint_{B(\nu^{\frac{1}{2}})}\int_{B(\frac{3 R_0}{8})}|u^{\nu}(x-y,s)-u^{\nu}(x,s)|^2 dxdy.
\end{equation}
Now for $\omega^{\nu}=\partial_{1}u^{\nu}_{2}-\partial_{2}u^{\nu}_{1}$, we proceed by estimating
$$\nu^2 \int\limits_{\delta}^{T}\int\limits_{B(\frac{3R_0}{8})} |\nabla \omega^{\nu}|^2 dxds. $$
Due to the assumption \eqref{eq:bddtotalspeedsmall}, identical arguments to Proposition \ref{prop:apriori1} yields that  for any $t\in (0,T)$
\begin{equation}\label{eq:vortenergyfromt}
\begin{split}
&\sup_{s\in (t,T)}\int\limits_{B(\frac{3R_0}{8})} |\omega^{\nu}(x,s)|^2 dx+\nu\int\limits_{t}^{T}\int\limits_{B(\frac{3R_0}{8})} |\nabla\omega^{\nu}|^2 dxds\\
&\lesssim\Big(\int\limits_{B(R_0)} |\omega^{\nu}(x,t)|^2 dx+\frac{\nu}{R_0^2}\int\limits_{t}^{T} \int\limits_{B(R_0)} |\omega^\nu|^2 dxds\Big).
\end{split}
\end{equation}
Multiplying this by $\nu$ and integrating over $t\in (\frac{\delta}{2},T)$ gives 
\begin{equation}\label{eq:vortenergyfromtintegrate}
\begin{split}
&\nu^2 \int\limits_{\frac{\delta}{2}}^{T}\int\limits_{t}^{T}\int\limits_{B(\frac{3R_0}{8})} |\nabla\omega^{\nu}|^2 dxdsdt\\
&\lesssim \nu\int\limits_{\frac{\delta}{2}}^{T}\int\limits_{B(R_0)} |\omega^{\nu}(x,t)|^2 dxdt+\frac{\nu^2}{R_0^2}\int\limits_{\frac{\delta}{2}}^{T}\int\limits_{t}^{T} \int\limits_{B(R_0)} |\omega^\nu|^2 dxdsdt.
\end{split}
\end{equation}
By Fubini, we get
\begin{equation}\label{eq:vortenergyfromtintegfubini}
\begin{split}
&\nu^2 \int\limits_{\frac{\delta}{2}}^{T} \int\limits_{B(\frac{3R_0}{8})} (s-\tfrac{\delta}{2})|\nabla\omega^{\nu}(x,s)|^2 dxds\\
&\lesssim \nu\int\limits_{\frac{\delta}{2}}^{T}\int\limits_{B(R_0)} |\omega^{\nu}|^2 dxdt+\frac{\nu^2}{R_0^2}\int\limits_{\frac{\delta}{2}}^{T} \int\limits_{B(R_0)} (s-\tfrac{\delta}{2})|\omega^{\nu}(x,s)|^2 dxds.
\end{split}
\end{equation}
Thus for $0<\delta\leq T$ we have
\begin{equation}\label{eq:gradvortestdelta}
\nu^2 \int\limits_{\delta}^{T} \int\limits_{B(\frac{3R_0}{8})} |\nabla\omega^{\nu}|^2 dxds
\lesssim \Big(\frac{1}{\delta}+\frac{\nu T}{\delta R_0^2}\Big)\nu\int\limits_{\frac{\delta}{2}}^{T}\int\limits_{B(R_0)} |\nabla u^{\nu}|^2 dxds.
\end{equation}
From this and \eqref{eq:umollifierestL2}-\eqref{eq:umollifierestgradL2}, we now show how to obtain \eqref{eq:aprioriest1.1} using arguments from \cite{DP25dissconc}.
Let $\varphi:\mathbb{R}^2\rightarrow [0,1]$ be a test function such that
\begin{itemize}
\item $\supp(\varphi)\subset B(\frac{3R_0}{8})$,
\item $\varphi=1$ on $B(\frac{R_0}{8})$ and 
\item $|\nabla^{k}\varphi(x)|\lesssim_{k} R_0^{-k}$ for all $x\in\mathbb{R}^2$ and $k=1,2\ldots$.
\end{itemize}
Then integrating by parts gives
\begin{equation}\label{eq:dissestIBP}
\begin{split}
&\nu\int\limits_{\delta}^{T} \int\limits_{B(\frac{R_0}{8})} |\nabla u^{\nu}|^2 dxds\leq \nu\int\limits_{\delta}^{T} \int\limits_{B(\frac{3R_0}{8})} |\nabla u^{\nu}|^2 \varphi dxds\\
&=\nu\int\limits_{\delta}^{T} \int\limits_{B(\frac{3R_0}{8})} \nabla u^{\nu}:\nabla (u^{\nu}-u^{\nu}_{\nu^{\frac{1}{2}}}) \varphi dxds+\nu\int\limits_{\delta}^{T} \int\limits_{B(\frac{3R_0}{8})} \nabla u^{\nu}:\nabla u^{\nu}_{\nu^{\frac{1}{2}}} \varphi dxds\\
&=-\nu\int\limits_{\delta}^{T} \int\limits_{B(\frac{3R_0}{8})} \Delta u^{\nu}\cdot (u^{\nu}-u^{\nu}_{\nu^{\frac{1}{2}}}) \varphi dxds-\nu\int\limits_{\delta}^{T} \int\limits_{B(\frac{3R_0}{8})} \nabla u^{\nu}: (u^{\nu}-u^{\nu}_{\nu^{\frac{1}{2}}})\otimes\nabla \varphi dxds\\
&+\nu\int\limits_{\delta}^{T} \int\limits_{B(\frac{3R_0}{8})} \nabla u^{\nu}:\nabla u^{\nu}_{\nu^{\frac{1}{2}}} \varphi dxds.
\end{split}
\end{equation}
Now $$-\Delta u^{\nu}(x,s)=\begin{bmatrix}
\partial_{2}\omega^{\nu}(x,s)\\
-\partial_{1}\omega^{\nu}(x,s)
\end{bmatrix}
.
$$
So from this, \eqref{eq:dissestIBP} and H\"{o}lder's inequality we get
\begin{equation}\label{eq:dissestIBPholder}
\begin{split}
&\nu\int\limits_{\delta}^{T} \int\limits_{B(\frac{R_0}{8})} |\nabla u^{\nu}|^2 dxds\leq\\
& \Big(\nu^2\int\limits_{\delta}^{T} \int\limits_{B(\frac{3R_0}{8})} |\nabla \omega^{\nu}|^2 dxds\Big)^{\frac{1}{2}}\Big(\int\limits_{\delta}^{T} \int\limits_{B(\frac{3R_0}{8})} |u^{\nu}-u^{\nu}_{\nu^{\frac{1}{2}}}|^2 dxds\Big)^{\frac{1}{2}}\\
&+\frac{\nu^{\frac{1}{2}}}{R_0}\Big(\nu\int\limits_{\delta}^{T} \int\limits_{B(\frac{3R_0}{8})} |\nabla u^{\nu}|^2 dxds\Big)^{\frac{1}{2}}\Big(\int\limits_{\delta}^{T} \int\limits_{B(\frac{3R_0}{8})} |u^{\nu}-u^{\nu}_{\nu^{\frac{1}{2}}}|^2 dxds\Big)^{\frac{1}{2}}\\
&+\Big(\nu\int\limits_{\delta}^{T} \int\limits_{B(\frac{3R_0}{8})} |\nabla u^{\nu}|^2 dxds\Big)^{\frac{1}{2}}\Big(\nu\int\limits_{\delta}^{T} \int\limits_{B(\frac{3R_0}{8})} |\nabla u^{\nu}_{\nu^{\frac{1}{2}}}|^2 dxds\Big)^{\frac{1}{2}}.
\end{split}
\end{equation}
Substituting \eqref{eq:gradvortestdelta} and \eqref{eq:umollifierestL2}-\eqref{eq:umollifierestgradL2} into this, we obtain the desired conclusion.
\end{proof}
\begin{proposition}\label{eq:aprioriest2}
Suppose $u^{\nu}:B(R_0)\times (0,\infty)\rightarrow \mathbb{R}^2$ is smooth and divergence-free. The following apriori estimate holds for $T>0$ and $R_0>0$
\begin{equation}\label{eq:apriori2}
\begin{split}
&\nu\int\limits_{0}^{T}\int\limits_{B(\frac{R_0}{8})} |\nabla u^{\nu}|^2 dxds\lesssim_{p} R_0^{-2}\nu T \sup_{t\in (0,T)} \int\limits_{B(\frac{3R_0}{16})}|u^{\nu} (x,t)|^2 dx\\
&+\nu \int\limits_{0}^{T}\int\limits_{B(\frac{3R_0}{16})}|\omega^\nu|^2 dxds.
\end{split}
\end{equation}
Here, $\omega^{\nu}=\partial_{1} u^{\nu}_{2}-\partial_{2}u^{\nu}_{1}.$
\end{proposition}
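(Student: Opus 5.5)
This is a local Caccioppoli/Biot–Savart type estimate that holds at each fixed time, with no Navier–Stokes dynamics used. It bounds the local $L^2$ norm of $\nabla u^{\nu}$ by the local $L^2$ norm of $\omega^{\nu}$ plus a lower-order term in $u^{\nu}$. The plan is to prove, for each fixed $t\in(0,T)$,
\begin{equation*}
\int\limits_{B(\frac{R_0}{8})} |\nabla u^{\nu}(x,t)|^2 dx\lesssim \int\limits_{B(\frac{3R_0}{16})}|\omega^{\nu}(x,t)|^2 dx+R_0^{-2}\int\limits_{B(\frac{3R_0}{16})}|u^{\nu}(x,t)|^2 dx.
\end{equation*}
Multiplying by $\nu$ and integrating over $(0,T)$ then gives \eqref{eq:apriori2}, after bounding $\int_0^T\int|u^{\nu}|^2\leq T\sup_t\int|u^{\nu}|^2$.

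To prove the fixed-time inequality I will use the pointwise identity, valid for smooth divergence-free fields in 2D,
\begin{equation*}
|\nabla u|^2=|\omega|^2+2\,\partial_{\beta}\big(u_{\alpha}\partial_{\alpha}u_{\beta}\big)-2\,\partial_{\alpha}\big(u_{\alpha}\,\textrm{div}\,u\big)+\ldots,
\end{equation*}
or more cleanly $\int |\nabla u|^2\psi^2=\int |\omega|^2\psi^2+\int(\textrm{div}\,u)^2\psi^2+(\text{terms with }\nabla\psi)$. Here $\psi$ is a cut-off with $\psi=1$ on $B(\frac{R_0}{8})$, $\supp\psi\subset B(\frac{3R_0}{16})$ and $|\nabla\psi|\lesssim R_0^{-1}$. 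Concretely, I expand $|\nabla u|^2=|\omega|^2+2(\partial_1u_1\partial_2u_2-\partial_2u_1\partial_1u_2)+(\ldots)$ and use $\textrm{div}\,u=0$. This reduces the integrand to $|\omega|^2$ plus a multiple of the Jacobian $\det\nabla u$, which is a null Lagrangian: $\det\nabla u=\partial_1(u_1\partial_2u_2)-\partial_2(u_1\partial_1u_2)$. Integrating against $\psi^2$ and integrating by parts moves one derivative onto $\psi^2$. This gives a boundary-free error bounded by $\int|u||\nabla u|\psi|\nabla\psi|$.

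Applying Young's inequality to this error term gives
\begin{equation*}
\int|u||\nabla u|\psi|\nabla\psi|\leq \tfrac12\int|\nabla u|^2\psi^2+C\int|u|^2|\nabla\psi|^2 .
\end{equation*}
The first piece is absorbed into the left-hand side, since everything is smooth and therefore finite. The second piece is $\lesssim R_0^{-2}\int_{B(3R_0/16)}|u|^2$, which yields the fixed-time inequality.

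The only step requiring care is the algebraic identity. Since $\partial_1u_1=-\partial_2u_2$, one has $|\nabla u|^2=2(\partial_1u_1)^2+(\partial_2u_1)^2+(\partial_1u_2)^2$ and $|\omega|^2=(\partial_1u_2)^2+(\partial_2u_1)^2-2\partial_1u_2\partial_2u_1$. Hence $|\nabla u|^2-|\omega|^2=-2\det\nabla u$, and it remains to check that the divergence-form rewriting of $\det\nabla u$ is correct. This is routine. The dependence of the constant on $p$ in the statement is vacuous.
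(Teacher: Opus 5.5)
Your argument is correct and is essentially the paper's proof. Both fix a time, integrate by parts against a cut-off to compare $\int\psi^2|\nabla u|^2$ with $\int\psi^2|\omega|^2$, use Young's inequality with absorption, and then integrate in time. The only difference is bookkeeping: the paper tests $-\Delta u=(\partial_2\omega,-\partial_1\omega)^{T}$ against $u\varphi^2$, which produces a $\Delta(\varphi^2)|u|^2$ term, whereas your null-Lagrangian rewriting of $|\nabla u|^2-|\omega|^2=-2\det\nabla u$ involves only $\nabla\psi$.

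One small slip: in your first heuristic display the coefficient should be $1$, not $2$, since for divergence-free $u$ one has exactly $|\nabla u|^2=|\omega|^2+\partial_\beta(u_\alpha\partial_\alpha u_\beta)$. Your subsequent explicit computation, which is what the proof actually uses, is correct.
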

\begin{proof}
 Localized Calder\'on-Zygmund estimates are known, see for example \cite[Proposition 2.9]{DP25dissconc}. We provide a proof for the reader's convenience.

Fix $s\in (0,T)$. Let $\varphi:\mathbb{R}^2\rightarrow [0,1]$ be a test function such that
\begin{itemize}
\item $\supp(\varphi)\subset B(\frac{3R_0}{16})$,
\item $\varphi=1$ on $B(\frac{R_0}{8})$ and 
\item $|\nabla^{k}\varphi(x)|\lesssim_{k} R_0^{-k}$ for all $x\in\mathbb{R}^2$ and $k=1,2\ldots$.
\end{itemize}
Now $$-\Delta u^{\nu}(x,s)=\begin{bmatrix}
\partial_{2}\omega^{\nu}(x,s)\\
-\partial_{1}\omega^{\nu}(x,s)
\end{bmatrix}
.
$$
Taking the dot product of this with $u^{\nu}\varphi^2$, integrating and  then integrating by parts yields
\begin{equation}\label{eq:IBPvort}
\begin{split}
&\int\limits_{\mathbb{R}^2}(\varphi(x))^2|\nabla u^{\nu}(x,s)|^2 dx=\frac{1}{2}\int\limits_{\mathbb{R}^2}\Delta((\varphi(x))^2)| u^{\nu}(x,s)|^2 dx\\
&+\int\limits_{\mathbb{R}^2}\omega^{\nu}(x,s)(\partial_{1}(u_2^{\nu}(x,s)\varphi(x)^2)-\partial_{2}(u_1^{\nu}(x,s)\varphi(x)^2)) dx.
\end{split}
\end{equation}
Thus,
\begin{equation*}
\begin{split}
&\int\limits_{\mathbb{R}^2}(\varphi(x))^2|\nabla u^{\nu}(x,s)|^2 dx\lesssim \int\limits_{\mathbb{R}^2}|\Delta((\varphi(x))^2)|| u^{\nu}(x,s)|^2 dx\\
&+\int\limits_{\mathbb{R}^2}|\omega^{\nu}(x,s)||\nabla u^{\nu}(x,s)|\varphi(x)^2+ |\omega^{\nu}(x,s)||u^{\nu}(x,s)||\nabla \varphi(x)| \varphi(x)dx.
\end{split}
\end{equation*}
This together with Young's inequality yields
\begin{equation*}
\begin{split}
&\int\limits_{\mathbb{R}^2}(\varphi(x))^2|\nabla u^{\nu}(x,s)|^2 dx\lesssim \int\limits_{\mathbb{R}^2}(|\Delta((\varphi(x))^2)|+|\nabla\varphi(x)|^2)| u^{\nu}(x,s)|^2 dx\\
&+\int\limits_{\mathbb{R}^2}|\omega^{\nu}(x,s)|^2(\varphi(x))^2 dx.
\end{split}
\end{equation*}
Using this and taking into account the previously mentioned properties of the test function yields that for all $s\in (0,T)$
\begin{equation}\label{eq:IBPvortyoungs}
\begin{split}
&\int\limits_{B(\frac{R_0}{8})}|\nabla u^{\nu}(x,s)|^2 dx\lesssim R_0^{-2}\int\limits_{B(\frac{3R_0}{16})}|u^{\nu}(x,s)|^2 dx+\int\limits_{B(\frac{3R_0}{16})}|\omega^{\nu}(x,s)|^2 dx.
\end{split}
\end{equation}
Multiplying \eqref{eq:IBPvortyoungs} by $\nu$, integrating over $(0,T)$ and using H\"{o}lder's inequality on the first term then gives the desired conclusion.
\end{proof}
\begin{proposition}\label{eq:aprioriest3}
Suppose that the same assumptions as in Proposition \ref{prop:apriori1} hold with $1<p\leq 2$. Furthermore, suppose that $\omega^{\nu}=\partial_{1} u^{\nu}_{2}-\partial_{2}u^{\nu}_{1}.$
Then, the following apriori estimate holds
\begin{equation}\label{eq:apriori3}
\begin{split}
&\nu\int\limits_{0}^{T}\int\limits_{B(\frac{R_0}{8})} |\nabla u^{\nu}|^2 dxds\lesssim_{p} R_0^{-2}\nu T \sup_{t\in (0,T)} \int\limits_{B(\frac{3R_0}{16})}|u^{\nu} (x,t)|^2 dx+\\
&\max(\nu TR_0^{-\frac{2(2-p)}{p}},(\nu T)^{2-\frac{2}{p}})\Big(\int\limits_{B(R_0)} |\omega_{0}^{\nu}(x)|^p dx+\\
&\frac{(\nu T)^{1-\frac{p}{2}}}{R_0^{p}}\Big(\nu\int\limits_{0}^{T} \int\limits_{B(R_0)} |\nabla u^\nu|^2 dxds\Big)^{\frac{p}{2}}\Big)^{\frac{2}{p}}.
\end{split}
\end{equation}
\end{proposition}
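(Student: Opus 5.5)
The plan is to combine the localized Calder\'on--Zygmund bound of Proposition \ref{eq:aprioriest2} with the local $L^p$ vorticity estimate \eqref{eq:aprioriest1} of Proposition \ref{prop:apriori1}. A Gagliardo--Nirenberg interpolation, as in \cite{CFLS16}, bridges the two. By Proposition \ref{eq:aprioriest2},
\[
\nu\int_0^T\int_{B(\frac{R_0}{8})}|\nabla u^\nu|^2\,dxds\lesssim R_0^{-2}\nu T\sup_{t\in(0,T)}\int_{B(\frac{3R_0}{16})}|u^\nu(x,t)|^2\,dx+\nu\int_0^T\int_{B(\frac{3R_0}{16})}|\omega^\nu|^2\,dxds .
\]
So it suffices to bound $\nu\int_0^T\int_{B(3R_0/16)}|\omega^\nu|^2$ by $\max(\nu TR_0^{-2(2-p)/p},(\nu T)^{2-2/p})E^{2/p}$. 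Here $E$ is the right-hand side of \eqref{eq:aprioriest1}, with $\nu\int\int|\omega^\nu|^2\le\nu\int\int|\nabla u^\nu|^2$ inserted; this substitution is valid since $\|\omega^\nu\|_{L^2}\le\|\nabla u^\nu\|_{L^2}$ pointwise in time.

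Set $f=|\omega^\nu|^{p/2}$ and $q=4/p\in[2,4)$, so that $|\omega^\nu|^2=f^q$. By \eqref{eq:aprioriest1}, $\sup_t\|f(t)\|_{L^2(B(3R_0/8))}^2\le C(p)E$ and $\nu\int_0^T\|\nabla f\|_{L^2(B(3R_0/8))}^2\le C(p)E$. The 2D Gagliardo--Nirenberg inequality on a ball is applied to $f\chi$, with $\chi$ a cutoff equal to $1$ on $B(3R_0/16)$, supported in $B(3R_0/8)$, and satisfying $|\nabla\chi|\lesssim R_0^{-1}$. Alternatively one can use the scale-invariant version on the ball. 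This gives
\[
\int_{B(\frac{3R_0}{16})}f^q\lesssim_p \|f\|_{L^2}^{2}\|\nabla f\|_{L^2}^{q-2}+R_0^{2-q}\|f\|_{L^2}^{q},
\]
where all norms are over $B(3R_0/8)$. This is the step needing care: one must verify that the lower-order term produced by the cutoff scales exactly as $R_0^{2-q}=R_0^{-2(2-p)/p}$.

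Next I integrate in time and multiply by $\nu$. The second term gives $\nu T R_0^{-2(2-p)/p}E^{q/2}=\nu TR_0^{-2(2-p)/p}E^{2/p}$. For the first term, bound $\|f\|_{L^2}^2\le CE$ and apply H\"older in time with exponents $\frac{2}{q-2}=\frac{p}{2-p}$ and its conjugate:
\[
\nu E\int_0^T\|\nabla f\|_{L^2}^{q-2}dt\le \nu E\,T^{\frac{2p-2}{p}}\Big(\int_0^T\|\nabla f\|_{L^2}^2\Big)^{\frac{2-p}{p}}\le \nu E\,T^{\frac{2p-2}{p}}(E/\nu)^{\frac{2-p}{p}}=(\nu T)^{2-\frac2p}E^{\frac2p}.
\]
Summing the two contributions yields \eqref{eq:apriori3}.

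The case $p=2$ is immediate, since then $q=2$ and there is no interpolation. The only genuine obstacle is the bookkeeping of the localized Gagliardo--Nirenberg step: the balls must nest correctly ($3R_0/16<3R_0/8$), and the cutoff terms must produce the stated $R_0$ power.
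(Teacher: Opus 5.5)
Your proposal is correct and follows the paper's proof essentially step for step. The paper also applies the Gagliardo--Nirenberg inequality to a cutoff of $|\omega^\nu|^{p/2}$ (equal to $1$ on $B(\tfrac{3R_0}{16})$, supported in $B(\tfrac{3R_0}{8})$), producing the same $R_0^{-2(2-p)/p}$ lower-order term, then uses H\"older in time with exponent $\tfrac{p}{2-p}$, and finally inserts \eqref{eq:aprioriest1} and Proposition \ref{eq:aprioriest2}.

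One cosmetic slip: on a ball, $\|\omega^\nu\|_{L^2}\le\|\nabla u^\nu\|_{L^2}$ holds only up to a factor $\sqrt{2}$, which comes from the pointwise bound $|\omega^\nu|\le\sqrt{2}|\nabla u^\nu|$. A rigid rotation violates the constant-one version. This factor is absorbed into $\lesssim_p$, so it does not affect the argument.
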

\begin{proof}
Fix $s\in (0,T)$. Let $\varphi:\mathbb{R}^2\rightarrow [0,1]$ be a test function such that
\begin{itemize}
\item $\supp(\varphi)\subset B(\frac{3R_0}{8})$,
\item $\varphi=1$ on $B(\frac{3R_0}{16})$ and 
\item $|\nabla^{k}\varphi(x)|\lesssim_{k} R_0^{-k}$ for all $x\in\mathbb{R}^2$ and $k=1,2\ldots$.
\end{itemize}
Following \cite{CFLS16}, we apply the Gagliardo-Nirenberg inequality (see, for example, \cite[Theorem 7.1]{MRR13}) to $\varphi |\omega^{\nu}|^{\frac{p}{2}}$ yielding
$$\|\varphi |\omega^{\nu}(\cdot,s)|^{\frac{p}{2}}\|_{L^{\frac{4}{p}}(B(\frac{3R_0}{8}))}\lesssim_{p} \|\varphi |\omega^{\nu}(\cdot,s)|^{\frac{p}{2}}\|_{L^{2}(B(\frac{3R_0}{8}))}^{\frac{p}{2}} \|\nabla(\varphi |\omega^{\nu}(\cdot,s)|^{\frac{p}{2}})\|_{L^{2}(B(\frac{3R_0}{8}))}^{\frac{2-p}{2}}.$$
Thus,
\begin{equation}\label{eq:GNvorttimeslice}
\begin{split}
\|\omega^{\nu}(\cdot,s)\|_{L^{2}(B(\frac{3R_0}{16}))}^{2}\lesssim_{p}& R_0^{-\frac{2(2-p)}{p}}\| |\omega^{\nu}(\cdot,s)|^{\frac{p}{2}}\|_{L^{2}(B(\frac{3 R_0}{8}))}^{\frac{4}{p}}+\\
&\| |\omega^{\nu}(\cdot,s)|^{\frac{p}{2}}\|_{L^{2}(B(\frac{3R_0}{8}))}^{2} \|\nabla |\omega^{\nu}(\cdot,s)|^{\frac{p}{2}}\|_{L^{2}(B(\frac{3R_0}{8}))}^{\frac{2(2-p)}{p}}.
\end{split}
\end{equation}
Integrating in time over $(0,T)$ and applying H\"{o}lder's inequality gives
\begin{equation}\label{eq:GNvortinteg}
\begin{split}
&\nu\int\limits_{0}^{T}\|\omega^{\nu}(\cdot,s)\|_{L^{2}(B(\frac{3R_0}{16}))}^{2} ds \lesssim_{p} \nu TR_0^{-\frac{2(2-p)}{p}}\sup_{s\in (0,T)}\| |\omega^{\nu}(\cdot,s)|^{\frac{p}{2}}\|_{L^{2}(B(\frac{3 R_0}{8}))}^{\frac{4}{p}}+\\
&\sup_{s\in (0,T)}\| |\omega^{\nu}(\cdot,s)|^{\frac{p}{2}}\|_{L^{2}(B(\frac{3R_0}{8}))}^{2} \nu\int\limits_{0}^{T}\|\nabla |\omega^{\nu}(\cdot,s)|^{\frac{p}{2}}\|_{L^{2}(B(\frac{3R_0}{8}))}^{\frac{2(2-p)}{p}}ds\\
&\lesssim_{p}\nu TR_0^{-\frac{2(2-p)}{p}}\sup_{s\in (0,T)}\| |\omega^{\nu}(\cdot,s)|^{\frac{p}{2}}\|_{L^{2}(B(\frac{3 R_0}{8}))}^{\frac{4}{p}}+\\
&(\nu T)^{2-\frac{2}{p}}\sup_{s\in (0,T)}\| |\omega^{\nu}(\cdot,s)|^{\frac{p}{2}}\|_{L^{2}(B(\frac{3R_0}{8}))}^{2} \Big(\nu\int\limits_{0}^{T}\|\nabla |\omega^{\nu}(\cdot,s)|^{\frac{p}{2}}\|_{L^{2}(B(\frac{3R_0}{8}))}^{2}ds\Big)^{\frac{2-p}{p}}.
\end{split}
\end{equation}
Substituting in \eqref{eq:aprioriest1} from Proposition \ref{prop:apriori1}, together with the fact that $|\omega^{\nu}|\lesssim |\nabla u^{\nu}|$, gives 
\begin{equation}\label{eq:vortLpinitial}
\begin{split}
&\nu\int\limits_{0}^{T}\|\omega^{\nu}(\cdot,s)\|_{L^{2}(B(\frac{3R_0}{16}))}^{2} ds \lesssim_{p}
 \max(\nu TR_0^{-\frac{2(2-p)}{p}},(\nu T)^{2-\frac{2}{p}})\Big(\int\limits_{B(R_0)} |\omega_{0}^{\nu}(x)|^p dx+\\
&\frac{(\nu T)^{1-\frac{p}{2}}}{R_0^{p}}\Big(\nu\int\limits_{0}^{T} \int\limits_{B(R_0)} |\nabla u^\nu|^2 dxds\Big)^{\frac{p}{2}}\Big)^{\frac{2}{p}}.
\end{split}
\end{equation}
Proposition \ref{eq:aprioriest2} then gives 
\begin{equation} 
\label{eq:locdissipationconc}
\begin{split}
&\nu\int\limits_{0}^{T}\int\limits_{B(\frac{R_0}{8})} |\nabla u^{\nu}|^2 dxds\lesssim_{p} R_0^{-2}\nu T \sup_{t\in (0,T)} \int\limits_{B(\frac{3R_0}{16})}|u^{\nu} (x,t)|^2 dx\\
&+\nu \int\limits_{0}^{T}\int\limits_{B(\frac{3R_0}{16})}|\omega^\nu|^2 dxds\lesssim_{p}R_0^{-2}\nu T \sup_{t\in (0,T)} \int\limits_{B(\frac{3R_0}{16})}|u^{\nu} (x,t)|^2 dx+\\&
\max(\nu T R_0^{-\frac{2(2-p)}{p}},(\nu T)^{2-\frac{2}{p}})\Big(\int\limits_{B(R_0)} |\omega_{0}^{\nu}(x)|^p dx+\\
&\frac{(\nu T)^{1-\frac{p}{2}}}{R_0^{p}}\Big(\nu\int\limits_{0}^{T} \int\limits_{B(R_0)} |\nabla u^\nu|^2 dxds\Big)^{\frac{p}{2}}\Big)^{\frac{2}{p}}
\end{split}
\end{equation}
 as required.
\end{proof}
\subsection{Proof of Theorem \ref{thm:nolocaanondis}}
Fix $K\subset\Omega$ to be a compact subset and define
\begin{equation}\label{eq:R0def}
R_0:=\frac{\textrm{dist}\,(K,\partial\Omega)}{2}.
\end{equation}
From the assumption \ref{eq:L1plusinfinity} and H\"{o}lder's inequality, we have for any $z\in K$ and $0<S\leq T$ that for all $\nu>0$
\begin{equation}\label{eq:L1Linfinitybound1}
\|u^{\nu}\|_{L^{1}(0,S; L^{\infty}(B(z,R_0)))}\leq S^{\frac{\varepsilon}{1+\varepsilon}}\|u^{\nu}\|_{L^{1+\varepsilon}(0,S; L^{\infty}(\Omega(R_0)))}\leq  S^{\frac{\varepsilon}{1+\varepsilon}} N(R_0).
\end{equation}
Define $t^{*}_{K}=t^{*}_{K}(K, \Omega, T,\varepsilon, N)$ by
\begin{equation}\label{eq:tkdef}
t_{K}^{*}:=\min\Big(T,\frac{1}{2}\Big(\frac{R_0}{8N(R_0)}\Big)^{\frac{1+\varepsilon}{\varepsilon}}\Big).
\end{equation}
Then from \eqref{eq:L1Linfinitybound1} we see that for all $\nu>0$
\begin{equation}\label{eq:L1LinfinityR08}
\|u^{\nu}\|_{L^{1}(0,t_{K}^{*}; L^{\infty}(B(z,R_0)))}<\frac{R_0}{8}.
\end{equation}
In what follows, we will also use that $u^{\nu}$ satisfies the energy equality
\begin{equation}\label{eq:energyequality}
\|u^{\nu}(\cdot,t)\|_{L^{2}(\Omega)}^2+2\nu\int\limits_{0}^{t}\int\limits_{\Omega}|\nabla u^{\nu}(x,s)|^2 dxds=\|u_{0}^{\nu}\|_{L^{2}(\Omega)}^2\quad\forall t\geq 0.
\end{equation}
\textbf{Case 1: $1<p\leq 2$}\\
From \eqref{eq:L1LinfinityR08}, we can apply Proposition \ref{prop:apriori1}. This, together with the energy equality, \ref{eq:L2iduniform} and \ref{eq:vortLpuniform}, gives for any $z\in K$ 
\begin{equation}\label{eq:apriori1applied}
\begin{split}
&\sup_{t\in (0,t_{K}^{*})}\int\limits_{B(z,\frac{3R_0}{8})} |\omega^{\nu}(x,t)|^p dx \lesssim_{p} \Big(\int\limits_{B(z,R_0)} |\omega_{0}^{\nu}(x)|^p dx+\\
&\frac{(\nu t_{K}^{*})^{1-\frac{p}{2}}}{R_0^{p}}\Big(\nu\int\limits_{0}^{t_{K}^{*}} \int\limits_{B(z,R_0)} |\omega^\nu|^2 dxds\Big)^{\frac{p}{2}}\Big)\lesssim_{p} M(R_0)^p+(\nu t_{K}^{*})^{1-\frac{p}{2}}R_0^{-p}J^p.
\end{split}
\end{equation}
From \eqref{eq:L1LinfinityR08}, we can also apply Proposition \ref{eq:aprioriest3}. This, together with the energy equality and \ref{eq:L2iduniform}, \ref{eq:vortLpuniform}, gives 
\begin{equation}\label{eq:apriori3applied}
\begin{split}
&\nu\int\limits_{0}^{t_{K}^{*}}\int\limits_{B(z,\frac{R_0}{8})} |\nabla u^{\nu}|^2 dxds\lesssim_{p} R_0^{-2}\nu t_{K}^{*} \sup_{t\in (0,t_{K}^{*})} \int\limits_{B(z,\frac{3R_0}{16})}|u^{\nu} (x,t)|^2 dx+\\
&\max(\nu t_{K}^{*}R_0^{-\frac{2(2-p)}{p}},(\nu t_{K}^{*})^{2-\frac{2}{p}})\Big(\int\limits_{B(z,R_0)} |\omega_{0}^{\nu}(x)|^p dx+\\
&\frac{(\nu t_{K}^{*})^{1-\frac{p}{2}}}{R_0^{p}}\Big(\nu\int\limits_{0}^{t_{K}^{*}} \int\limits_{B(z,R_0)} |\nabla u^\nu|^2 dxds\Big)^{\frac{p}{2}}\Big)^{\frac{2}{p}}\lesssim_{p}R_0^{-2}\nu t_{K}^{*} J^2+\\
&\max(\nu t_{K}^{*}R_0^{-\frac{2(2-p)}{p}},(\nu t_{K}^{*})^{2-\frac{2}{p}})\Big((M(R_0))^p+\frac{(\nu t_{K}^{*})^{1-\frac{p}{2}}J^p}{R_0^{p}}\Big)^{\frac{2}{p}}.
\end{split}
\end{equation}
Since $K\subset \Omega$ is compact, we let $(z_{i})_{1\leq i\leq Q}$ be in $K$ such that we have the finite covering
$$K\subset\cup_{i=1}^{Q} B(z_{i}, \tfrac{R_0}{8}).$$
Applying \eqref{eq:apriori1applied} to each $z_{i}$ and summing over $i=1\ldots Q$ then gives that for each $t\in (0,t_{K}^{*})$ that
\begin{equation}\label{eq:vortLpoverKest}
\begin{split}
&\int\limits_{K} |\omega^{\nu}(x,t)|^p dx\leq \sum_{i=1}^{Q}\int\limits_{B(z_{i},\frac{R_0}{8})} |\omega^{\nu}(x,t)|^p dx\lesssim_{p} Q M(R_0)^p+Q(\nu t_{K}^{*})^{1-\frac{p}{2}}R_0^{-p}J^p.
\end{split}
\end{equation} 
Hence,
$$\sup_{\nu>0}\|\omega^{\nu}\|_{L^{\infty}(0,t_{K}^{*}; L^{p}(K))}<\infty $$ as required.
Applying \eqref{eq:apriori3applied} to each $z_{i}$ and summing over $i=1\ldots Q$ then gives
\begin{equation}\label{eq:dissipationoverKest}
\begin{split}
&\nu\int\limits_{0}^{t_{K}^{*}}\int\limits_{K} |\nabla u^{\nu}|^2 dxds\leq \sum_{i=1}^{Q}\nu\int\limits_{0}^{t_{K}^{*}}\int\limits_{B(z_{i},\frac{R_0}{8})} |\nabla u^{\nu}|^2 dxds\\
&\lesssim_{p} QR_0^{-2}\nu t_{K}^{*} J^2+
Q\max(\nu t_{K}^{*}R_0^{-\frac{2(2-p)}{p}},(\nu t_{K}^{*})^{2-\frac{2}{p}})\times\\
&\Big((M(R_0))^p+\frac{(\nu t_{K}^{*})^{1-\frac{p}{2}}J^p}{R_0^{p}}\Big)^{\frac{2}{p}}.
\end{split}
\end{equation} 
From this, we see that the leading order behavior as $\nu\rightarrow0$ is $\nu^{2-\frac{2}{p}}$. Hence,
$$ \lim_{v\rightarrow 0}\nu\int\limits_{0}^{t_{K}^{*}}\int\limits_{K} |\nabla u^{\nu}|^2 dxds=0$$
as required.\\
\textbf{Case 2: $2<p<\infty$}\\
 Assumption \ref{eq:vortLpuniform}, H\"{o}lder's inequality and case 1 immediately give \eqref{eq:thm1conclusion}. We therefore focus on showing the local vorticity bound \eqref{eq:thm1conclusionvortbound}.

Let us first focus on the case $2<p\leq 4$, before explaining how to treat general $2<p<\infty$.

Let $R_0$ be given by \eqref{eq:R0def} and let $t_{K}^{*}$ be given by \eqref{eq:tkdef} such that \eqref{eq:L1LinfinityR08} holds true. Define
\begin{equation}\label{eq:R0'def}
R_0':=\frac{3}{16}R_0=\frac{3\textrm{dist}\,(K,\partial\Omega)}{32}
\end{equation}
and define $t'_{K}=t'_{K}(K, \Omega, T,\varepsilon, N)$ by
\begin{equation}\label{eq:t'def}
t'_{K}:=\min\Big(T,\frac{1}{2}\Big(\frac{R'_0}{8N(R_0)}\Big)^{\frac{1+\varepsilon}{\varepsilon}},\Big).
\end{equation}
By \eqref{eq:L1LinfinityR08} we have
\begin{equation}\label{eq:L1Linfinityt'R0}
\|u^{\nu}\|_{L^{1}(0,t'_{K}; L^{\infty}(B(z,R_0)))}<\frac{R_0}{8}.
\end{equation}
From \eqref{eq:L1Linfinitybound1} and \eqref{eq:R0'def}-\eqref{eq:t'def}, we have that for any $z\in K$ 
\begin{equation}\label{eq:L1Linfinitybound1t'R0'}
\begin{split}
\|u^{\nu}\|_{L^{1}(0,t'_{K}; L^{\infty}(B(z,R'_0)))}&\leq \|u^{\nu}\|_{L^{1}(0,t'_{K}; L^{\infty}(B(z,R_0)))}\\
& \leq (t'_{K})^{\frac{\varepsilon}{1+\varepsilon}}\|u^{\nu}\|_{L^{1+\varepsilon}(0,t'_{K}; L^{\infty}(\Omega(R_0)))}\\
&\leq  (t'_{K})^{\frac{\varepsilon}{1+\varepsilon}} N(R_0)<\frac{R'_{0}}{8}.
\end{split}
\end{equation}
Now \eqref{eq:L1Linfinityt'R0} allows us to apply Proposition \ref{prop:apriori1}. 
This, together with the energy equality, \ref{eq:L2iduniform}, \ref{eq:vortLpuniform} and H\"{o}lder's inequality, implies that for $z\in K$:
\begin{equation}\label{eq:aprioriappliedp=2}
\begin{split}
&\sup_{t\in (0,t'_{K})}\int\limits_{B(z,2R'_0)} |\omega^{\nu}(x,t)|^2 dx+\nu\int\limits_{0}^{t'_{K}}\int\limits_{B(z,2R'_0)} |\nabla|\omega^{\nu}||^2 dxds\\
&\leq C\Big(\int\limits_{B(z,R_0)} |\omega_{0}^{\nu}(x)|^2 dx+\frac{\nu}{R_0^2}\int\limits_{0}^{t'_{K}} \int\limits_{B(z,R_0)} |\omega^\nu|^2 dxds\Big)\\
&\leq C(p) M(R_0)^{\frac{2}{p}}R_0^{2-\frac{4}{p}}+C R_0^{-2} J^2.
\end{split}
\end{equation}
Fix $s\in (0,t'_{K})$. Let $\varphi:\mathbb{R}^2\rightarrow [0,1]$ be a test function such that
\begin{itemize}
\item $\supp(\varphi)\subset B(z,2R'_{0})$,
\item $\varphi=1$ on $B(z,{R'_0})$ and 
\item $|\nabla^{k}\varphi(x)|\lesssim_{k} (R'_0)^{-k}$ for all $x\in\mathbb{R}^2$ and $k=1,2\ldots$.
\end{itemize}
Now applying the Gagliardo-Nirenberg inequality\footnote{This special case of the Gagliardo-Nirenberg is also referred to as `Ladyzhenskaya's inequality'.} to $\varphi |\omega^{\nu}|$ gives 
$$\nu\|\varphi |\omega^{\nu}(\cdot,s)|\|_{L^{4}(B(z,2R_0'))}^4\lesssim \|\varphi |\omega^{\nu}(\cdot,s)|\|_{L^{2}(B(z,2R_0'))}^{2} \nu\|\nabla(\varphi |\omega^{\nu}(\cdot,s)|)\|_{L^{2}(B(z,2R_0'))}^{2}.$$
Integrating this over $ (0,t'_{K})$ and using the properties of the test function $\varphi$ gives
\begin{equation}\label{eq:L4bootstrap}
\begin{split}
&\nu\int\limits_{0}^{t'_{K}}\int\limits_{B(z,R'_{0})}|\omega^{\nu}(x,s)|^4 dxds\lesssim \nu(R'_0)^{-1}\int\limits_{0}^{t'_{K}} \|\omega^{\nu}(\cdot,s)\|_{L^{2}(B(z,2R'_0))}^4 ds+\\
&\sup_{s\in (0,t'_{K})}\|\omega^{\nu}(\cdot,s)\|_{L^{2}(B(z,2R'_0))}^2 \Big(\nu \int\limits_{0}^{t'_{K}}\int\limits_{B(z,2R'_0)}|\nabla|\omega^{\nu}||^2 dxds\Big)\lesssim\\
& \nu t'_{K}(R'_0)^{-1}\sup_{s\in (0,t'_{K})} \|\omega^{\nu}(\cdot,s)\|_{L^{2}(B(z,2R'_0))}^4 +\\
&\sup_{s\in (0,t'_{K})}\|\omega^{\nu}(\cdot,s)\|_{L^{2}(B(z,2R'_0))}^2 \Big(\nu \int\limits_{0}^{t'_{K}}\int\limits_{B(z,2R'_0)}|\nabla|\omega^{\nu}||^2 dxds\Big).
\end{split} 
\end{equation}
Substituting in the bound \eqref{eq:aprioriappliedp=2} then gives
\begin{equation}\label{eq:L4bootstrapuniformfinal}
\begin{split}
&\nu\int\limits_{0}^{t'_{K}}\int\limits_{B(z,R'_{0})}|\omega^{\nu}|^4 dxds
\lesssim_{p} \max(1, \nu t'_{K}(R'_0)^{-1})(M(R_0)^{\frac{2}{p}}R_0^{2-\frac{4}{p}}+R_0^{-2} J^2)^2.
\end{split} 
\end{equation}
So by H\"{o}lder's inequality we have
\begin{equation}\label{eq:Lpbootstrapuniform}
\begin{split}
&\nu\int\limits_{0}^{t'_{K}}\int\limits_{B(z,R'_{0})}|\omega^{\nu}|^p dxds\lesssim_{p} (\nu t'_{K} (R'_{0})^2)^{1-\frac{4}{p}}\Big(\nu\int\limits_{0}^{t'_{K}}\int\limits_{B(z,R'_{0})}|\omega^{\nu}|^4 dxds\Big)^{\frac{4}{p}} \\
&\lesssim_{p} (\nu t'_{K} (R'_{0})^2)^{1-\frac{4}{p}}(\max(1, \nu t'_{K}(R'_0)^{-1})(M(R_0)^{\frac{2}{p}}R_0^{2-\frac{4}{p}}+ R_0^{-2} J^2)^2)^{\frac{4}{p}}.
\end{split}
\end{equation}
Recalling \eqref{eq:L1Linfinitybound1t'R0'}, we can now apply Propososition \ref{prop:apriori1} again. This, together with  \eqref{eq:Lpbootstrapuniform}, gives
\begin{equation}\label{eq:aprioriappliedp>2}
\begin{split}
&\sup_{t\in (0,t'_{K})}\int\limits_{B(z,\frac{3R'_0}{8})} |\omega^{\nu}(x,t)|^p dx+\nu\int\limits_{0}^{t'_{K}}\int\limits_{B(z,\frac{3 R'_0}{8})} |\nabla(|\omega^{\nu}|^{\frac{p}{2}})|^2 dxds\\
&\leq C(p)\Big(\int\limits_{B(z,R'_0)} |\omega_{0}^{\nu}(x)|^p dx+\frac{\nu}{(R'_0)^2}\int\limits_{0}^{t'_{K}} \int\limits_{B(z,R'_0)} |\omega^\nu|^p dxds\Big)\\
&\lesssim_{p} M(R_0)^p+\\
&(R'_0)^{-2}(\nu t'_{K} (R'_{0})^2)^{1-\frac{4}{p}}(\max(1, \nu t'_{K}(R'_0)^{-1})(M(R_0)^{\frac{2}{p}}R_0^{2-\frac{4}{p}}+ R_0^{-2} J^2)^2)^{\frac{4}{p}}.
\end{split}
\end{equation}
A similar covering argument to \textbf{Case 1} then gives that
\begin{equation}\label{eq:vortLpoverK}
\begin{split}
&\sup_{t\in (0,t'_{K}), \nu>0}\int\limits_{K} |\omega^{\nu}(x,t)|^p dx\lesssim_{p} Q'(K)M(R_0)^p+\\
&Q'(K)(R'_0)^{-2}(\nu t'_{K} (R'_{0})^2)^{1-\frac{4}{p}}(\max(1, \nu t'_{K}(R'_0)^{-1})(M(R_0)^{\frac{2}{p}}R_0^{2-\frac{4}{p}}+ R_0^{-2} J^2)^2)^{\frac{4}{p}}.
\end{split}
\end{equation}
Now if $4<p\leq 8$ instead, we would then use \eqref{eq:L4bootstrapuniformfinal} and the bound \eqref{eq:aprioriappliedp>2} for integrability exponent 4, together with the aforementioned Gagliardo-Nirenberg inequality for $\varphi |\omega^\nu|^2$, to get a uniform (in $\nu$) bound on
$$ \nu\int\limits_{0}^{t'_{K}}\int\limits_{B(z,\frac{3 R'_{0}}{16})}|\omega^{\nu}|^8 dxds.$$ 
For appropriately chosen $R''_{0}$ and $t''_{K}=t''_{K}(K,\Omega, T,\varepsilon,N)$, one can then apply Proposition \ref{prop:apriori1} to bound (uniformily in $\nu$)
$$\sup_{t\in (0,t''_{K})}\int\limits_{B(z,\frac{3R''_0}{8})} |\omega^{\nu}(x,t)|^p dx+\nu\int\limits_{0}^{t''_{K}}\int\limits_{B(z,\frac{3 R''_0}{8})} |\nabla(|\omega^{\nu}|^{\frac{p}{2}})|^2 dxds $$ and then 
$$ \sup_{t\in (0,t''_{K}),\nu>0}\int\limits_{K} |\omega^{\nu}(x,t)|^p dx$$ by a covering argument. Continuing iteratively in this way, we can get the desired conclusion  \eqref{eq:thm1conclusionvortbound} for any $2<p<\infty$.
\begin{remark}\label{rmk:moseriterations}[Comparison with Moser iterations]
The iteration procedure is analogous to the application of Moser iterations in \cite{AD23} to show local boundedness of solutions to parabolic equations with certain divergence-free drifts in $L^{1}_{t}L^{\infty}_{x}$. 
\end{remark}
\section{Local energy balance for the large scale approximation}
\subsection{Definitions and derivations}
In what follows we denote 
\begin{equation}\label{eq:Lapace2Dinversegrapperp}
\Delta_{2D}^{-1}=\frac{1}{2\pi}\log(|x|)\star\quad\textrm{and}\quad \nabla^{\perp}=\begin{bmatrix} -\partial_{2}\\
\partial_{1} \end{bmatrix}.
\end{equation}
Let $\Omega\subset\mathbb{R}^2$ be a bounded domain and let $u:\Omega\rightarrow \mathbb{R}^2$ be a sufficiently integrable function. Let $\varphi\in C^{\infty}_{0}(\Omega; [0,1])$. Following \cite{K23}, define the 2D \textit{local Leray projection/large scale approximation} by 
\begin{equation}\label{eq:locallerayprojectiondef}
\mathbb{P}_{\varphi}(u):=\varphi u-h_{\varphi}(u),\quad\textrm{with}\quad h_{\varphi}(u):=\nabla \nabla\cdot\Delta_{2D}^{-1}(u\varphi)+\nabla^{\perp}\Delta_{2D}^{-1}(\nabla^{\perp}\varphi\cdot u).
\end{equation}
As noticed in \cite{K23}, by Calder\'{o}n-Zygmund estimates we have that for $1<q<\infty$ and $k=1,2\ldots$ that
\begin{equation}\label{eq:CZlocalleray}
\|\nabla^{k}\mathbb{P}_{\varphi}(u)\|_{L^{q}(\mathbb{R}^2)}\lesssim_{q,\varphi,k} \|u\|_{W^{k,q}(\supp(\varphi))}.
\end{equation}
Moreover, for $2 < q<\infty$ we have that
\begin{equation}\label{eq:CZlocalleray0}
\|\mathbb{P}_{\varphi}(u)\|_{L^{q}(\mathbb{R}^2)}\lesssim_{q,\varphi} \|u\|_{L^{q}(\supp(\varphi))}.
\end{equation}
The 2D vector calculus identities for a vector field $F$ and scalar $\phi$
\begin{equation}\label{eq:laplace2D}
\Delta F=\nabla^{\perp}(\nabla^{\perp}\cdot F)+\nabla(\nabla\cdot F)\quad\textrm{and}
\end{equation}
\begin{equation}\label{eq:testcurl}
\nabla^{\perp}\cdot(\phi F)=\phi\nabla^{\perp}\cdot F+\nabla^{\perp}\phi\cdot F
\end{equation}
 give that if $u$ is sufficiently smooth then $\mathbb{P}_{\varphi}(u) $ can be equivalently expressed as
\begin{equation}\label{eq:locallerayprojectcurl}
\mathbb{P}_{\varphi}(u)=\nabla^{\perp} \Delta_{2D}^{-1}(\varphi\nabla^{\perp}\cdot u).
\end{equation}
In particular,  for $u$ being sufficiently integrable, $\mathbb{P}_{\varphi} u$ is divergence-free in the sense of distributions by a standard approximation procedure. Moreover if $u$ is sufficiently integrable and divergence-free in the sense of distributions, we equivalently have that $h_{\varphi}(u)$ is given by
\begin{equation}\label{eq:hdivergencefree}
h_{\varphi}(u):=\nabla \Delta_{2D}^{-1}(u\cdot\nabla\varphi)+\nabla^{\perp}\Delta_{2D}^{-1}(\nabla^{\perp}\varphi\cdot u).
\end{equation}
 From this, we see that if $u$ is divergence-free and sufficiently integrable with $\Omega_{0}\Subset\{x:\varphi(x)=1\}$, then each component of $h_{\varphi}(u)$ solves Laplace's equation on $\Omega_{0}$ and $h_{\varphi}(u)$ is divergence-free on $\Omega_{0}$. As observed in \cite{K23}, in this case one also has the estimate
 \begin{equation}\label{eq:hestimate}
 \|\nabla^{k} h_{\varphi}(u)\|_{L^{\infty}(\Omega_0)}\lesssim_{k} (\textrm{dist}(\Omega_0, \supp\nabla\varphi))^{-(k+1)}\|u\|_{L^{1}(\Omega)}\|\nabla\varphi\|_{L^{\infty}(\Omega)}
 \end{equation}
  for $k=0,1\ldots.$
 \begin{remark}[Interpretation as a large scale approximation]\label{rmk:largescaleapprox}
 
 For $u$ being a\\ divergence-free function, from \eqref{eq:hestimate} it is reasonable to also interpret $\mathbb{P}_{\varphi} u$ is a \textit{large scale approximation to $u$}. In particular, let $\Omega=B(R)$, with
 $\varphi_{R}\in C^{\infty}_{0}(B(R); [0,1])$ and
 \begin{itemize}
  \item $\varphi_{R}=1$ on $B(\tfrac{3R}{4})$,
  \item $|\nabla \varphi_{R}(x)|\lesssim R^{-1}$ for all $x\in\mathbb{R}^2$.
  \end{itemize}
   Then we see from \eqref{eq:hestimate} that the error satisfies 
   \begin{equation}\label{eq:largescaleapproxerror}
   \begin{split}
   \|\nabla^{k}(\mathbb{P}_{\varphi_{R}}(u)- u)\|_{L^{\infty}(B(\tfrac{R}{2}))}&=\|\nabla^{k}(\mathbb{P}_{\varphi_{R}}(u)-\varphi_{R} u)\|_{L^{\infty}(B(\tfrac{R}{2}))}\\
   &\lesssim_{k} R^{-(k+2)}\|u\|_{L^{1}(B(R))}
   \end{split}
   \end{equation}
   for $k=0,1\ldots.$
 \end{remark}
 Let $\varphi$, $\Omega$, $u$ and $\Omega_0$ be as stated prior to the previous remark, with $u$ being divergence-free. Note that for a vector field $F$, divergence-free vector field $G$ and scalar $\phi$, we have the identities 
 \begin{equation}\label{eq:perpdeltatest}
 \Delta\phi\nabla^{\perp}\cdot F=\nabla^{\perp}\cdot( F\Delta\phi )-\nabla^{\perp}\Delta\phi\cdot F,
 \end{equation}
 \begin{equation}\label{eq:nablaperp}
 \quad \partial_{i}\phi\nabla^{\perp}\cdot F=\nabla^{\perp}\cdot( F\partial_{i}\phi )-\nabla^{\perp}\partial_{i}\phi\cdot F\quad\textrm{for}\,i=1,2,3,
 \end{equation}
 \begin{equation}\label{eq:perpdotlaplace}
 \phi \nabla^{\perp}\cdot\Delta F=\Delta(\phi \nabla^{\perp}\cdot F)+\nabla^{\perp}\cdot( F\Delta\phi )-(\nabla^{\perp}\Delta\phi)\cdot F-2\partial_{i}\,(\nabla^{\perp}\cdot(F\partial_{i}\phi )-\nabla^{\perp}\partial_{i}\phi\cdot F),
 \end{equation}
 \begin{equation}\label{eq:perptestdotconvective}
 \nabla^{\perp}\phi\cdot(G\cdot\nabla G)=\nabla\cdot( G\nabla^{\perp}\phi\cdot G)-G\otimes G:\nabla(\nabla^{\perp}\phi)
 \end{equation}
 and
 \begin{equation}\label{eq:testconvective}
 \phi G\cdot\nabla G =\nabla\cdot((G\otimes G)\phi)-(G\cdot\nabla\phi)G.
 \end{equation}
 
 Following similar arguments to \cite{K23}, in the 2D setting one can use \eqref{eq:locallerayprojectcurl} and \eqref{eq:perpdotlaplace} to show that
 \begin{equation}\label{eq:commutatorlaplace}
 \begin{split}
 &[\Delta,\mathbb{P}_{\varphi}]u:=\Delta\mathbb{P}_{\varphi}(u)-\mathbb{P}_{\varphi}(\Delta u)\\
 &=\nabla^{\perp}\Delta_{2D}^{-1}(u\cdot\nabla^{\perp}\Delta\varphi)+2\nabla^{\perp}\partial_{i}\Big(\nabla^{\perp}\cdot\Delta_{2D}^{-1}(u\partial_{i}\varphi )\Big)\\
 &-2\nabla^{\perp}\partial_{i} \Big(\Delta_{2D}^{-1}(u\cdot \nabla^{\perp}\partial_{i}\varphi )\Big)-\nabla^{\perp}\nabla^{\perp}\cdot \Delta_{2D}^{-1}(u\Delta\varphi).
 \end{split}
 \end{equation}
 Moreover, using \eqref{eq:locallerayprojectiondef} and \eqref{eq:perptestdotconvective}-\eqref{eq:testconvective} we have
 \begin{equation}\label{eq:commutatornonlinear}
 -\varphi u\cdot\nabla u+\mathbb{P}_{\varphi}(u\cdot\nabla u)=\nabla p^{\varphi}_{u\otimes u}+F^{\varphi}_{u\otimes u}\quad\textrm{with}
 \end{equation}
 \begin{equation}\label{eq:localpressureexpression}
 p^{\varphi}_{u\otimes u}:=-\partial_{i}\partial_{j}\Delta_{2D}^{-1}(\varphi u_{i}u_{j})\quad\textrm{and}
 \end{equation}
 \begin{equation}\label{eq:Fdef}
 F^{\varphi}_{u\otimes u}:=\nabla^{\perp}\Delta_{2D}^{-1}(u\otimes u:\nabla(\nabla^{\perp}\varphi))+\nabla(\nabla\cdot(\Delta_{2D}^{-1}(u(u\cdot\nabla\varphi))))-\nabla^{\perp}\nabla\cdot\Delta_{2D}^{-1}( u\nabla^{\perp}\varphi\cdot u).
 \end{equation}
 As observed in \cite{K23}, we can argue similarly as for $h_{\varphi}(u)$ in \eqref{eq:hestimate} to show for $k=0,1\ldots$
\begin{equation}\label{eq:commlaplaceestimate}
 \|\nabla^{k} [\Delta,\mathbb{P}_{\varphi}]u\|_{L^{\infty}(\Omega_0)}\lesssim_{k,\Omega_0,\varphi} \|u\|_{L^{1}(\Omega)}\quad\textrm{and}
 \end{equation}
 \begin{equation}\label{eq:nonlincommutator}
 \begin{split}
 \|\nabla^{k} F^{\varphi}_{u\otimes u}\|_{L^{\infty}(\Omega_0)}&\lesssim_{k,\Omega_0,\varphi} (\|\nabla^2\varphi\|_{L^{\infty}(\Omega)}+\|\nabla\varphi\|_{L^{\infty}(\Omega)})\|u\otimes u\|_{L^{1}(\Omega)}\\
 &\lesssim (\|\nabla^2\varphi\|_{L^{\infty}(\Omega)}+\|\nabla\varphi\|_{L^{\infty}(\Omega)})\|u\|_{L^{2}(\Omega)}^2.
 \end{split}
 \end{equation}
 Furthermore, Calder\'{o}n-Zygmund estimates give that for $1<q<\infty$
 \begin{equation}\label{eq:localpressureest}
 \|p^{\varphi}_{u\otimes u}\|_{L^{q}(\mathbb{R}^2)}\lesssim_{q}\|\varphi u\otimes u\|_{L^q(\mathbb{R}^2)}\lesssim\|\varphi |u|^2\|_{L^q(\mathbb{R}^2)}\lesssim_{q}\|u\|^{2}_{L^{2q}(\supp\varphi)}.
 \end{equation}
Using \eqref{eq:commutatorlaplace}-\eqref{eq:commutatornonlinear}, we see that  if $u^{\nu}:\Omega\times (0,T)\rightarrow \mathbb{R}^2$ and $p^{\nu}:\Omega\times (0,T)\rightarrow\mathbb{R}$ are sufficiently smooth and solve the 2D Navier-Stokes equations, then $\mathbb{P}_{\varphi}(u^{\nu})$ satisfies 
 \begin{equation}\label{eq:localLerayeqn}
 \begin{split}
 &\partial_{t}\mathbb{P}_{\varphi}(u^{\nu})-\nu\Delta \mathbb{P}_{\varphi}(u^{\nu})+u^{\nu}\cdot \nabla u^{\nu}+\nabla p^{\varphi}_{u^{\nu}\otimes u^{\nu}}+ F^{\varphi}_{u^{\nu}\otimes u^{\nu}}+\nu [\Delta,\mathbb{P}_{\varphi}]u^{\nu}=0,\\& \nabla\cdot \mathbb{P}_{\varphi}(u^{\nu})=0\quad \textrm{in}\quad \Omega_0\times (0,T).
 \end{split}
 \end{equation}
 
 Now we can define the local energy balance for the large-scale approximation for a weak solution of 2D Euler.
 \begin{definition}\label{def:locallerayenergybalance}[Local energy balance for the 2D Euler large-scale approximation]
 Let $\Omega\subset\mathbb{R}^2$ be a bounded domain. let
$u^{E}\in L^{3}(0,T;L^{3}(\Omega))$ is a weak solution of the 2D Euler equation in $\Omega\times (0,T)$.
Let $\varphi\in C^{\infty}_{0}(\Omega; [0,1])$ with $\Omega_{0}\Subset \{x:\varphi(x)=1\}.$ We say $\mathbb{P}_{\varphi}(u^{E})$ satisfies the local energy balance on $\Omega_0\times (0,T)$ if
\begin{equation}\label{eq:localenergybalanceeq}
\begin{split}
&\int\limits_{0}^{T}\int\limits_{\Omega_0} -|\mathbb{P}_{\varphi}(u^{E})|^2\partial_{t}\phi\, dxdt=\int\limits_{0}^{T}\int\limits_{\Omega_0} (|u^{E}|^2 u^{E}+2p_{ u^{E}\otimes u^{E}}^{\varphi}\mathbb{P}_{\varphi}(u^{E}))\cdot\nabla \phi \,dxdt\\
&-2\int\limits_{0}^{T}\int\limits_{\Omega_0} u^{E}\otimes u^{E}: \nabla(\phi h_{\varphi}(u^{E}))+F^{\varphi}_{u^{E}\otimes u^{E}}\cdot \mathbb{P}_{\varphi}(u^{E})\phi \, dxdt\quad\\
& \forall\,\phi\in C^{\infty}_{0}(\Omega_{0}\times (0,T)).
\end{split}
\end{equation}
 \end{definition}
 \begin{remark}\label{rmk:termswelldefined}[Integrability assumption in local-energy balance for large scale approximation]
 The estimates \eqref{eq:hestimate} and \eqref{eq:nonlincommutator}-\eqref{eq:localpressureest} ensure that each of the terms in \eqref{eq:localenergybalanceeq} is well defined for $u^{E}\in L^{3}(0,T; L^{3}(\Omega))$.
 \end{remark}
 \begin{remark}\label{rmk:relationtolocenergy}[Relation with the local-energy balance for $u^{E}$]
 Let $u^{E}:\mathbb{R}^2\times (0,T)\rightarrow \mathbb{R}^2$ be a weak solution to the 2D Euler equations on $\mathbb{R}^2\times (0,T)$ with $u^{E}\in L^{3}(0,T; L^{3}(\mathbb{R}^2))$ and corresponding pressure $$p_{u^{E}\otimes u^{E}}:=-\partial_{i}\partial_{j}\Delta_{2D}^{-1}( u^{E}_{i}u^{E}_{j}).$$ 
Let $\phi\in C^{\infty}_{0}(\mathbb{R}^2; [0,1])$ be a fixed test function and $\varphi_{R}$ be as in Remark \ref{rmk:largescaleapprox} with $R\geq 1$ such that $\supp(\phi)\subseteq B(\tfrac{R}{2})$.

Then by \eqref{eq:hestimate}, Remark \ref{rmk:largescaleapprox}, \eqref{eq:nonlincommutator} and Calder\'{o}n-Zygmund estimates, one has that as $R\rightarrow\infty$
$$\|h_{\varphi_{R}}(u^{E})\|_{L^{3}(0,T; L^{3}(\supp(\phi)))}\rightarrow 0,\quad \|\nabla h_{\varphi_{R}}(u^{E})\|_{L^{3}(0,T; L^{3}(\supp(\phi)))}\rightarrow 0,$$
$$\|\mathbb{P}_{\varphi_{R}}(u^{E})-u^{E}\|_{L^{3}(0,T; L^{3}(\supp(\phi)))}\rightarrow 0,\quad \|F^{\varphi_{R}}_{u^{E}\otimes u^{E}}\|_{L^{\frac{3}{2}}(0,T; L^{\frac{3}{2}}(\supp(\phi)))}\rightarrow 0$$ and
$$ \|p^{\varphi_{R}}_{u^{E}\otimes u^{E}}-p_{u^{E}\otimes u^{E}}\|_{L^{\frac{3}{2}}(0,T; L^{\frac{3}{2}}(\mathbb{R}^2))}\rightarrow 0.$$
So if $\mathbb{P}_{\varphi_{R}}(u^{E})$ satisfies \eqref{eq:localenergybalanceeq} for all sufficiently large $R$ ($\phi$ being fixed as in this Remark), one can use the above convergences to pass to the limit $R\rightarrow\infty$ to recover the local energy balance for $u^{E}$:
\begin{equation*}
\begin{split}
&\int\limits_{0}^{T}\int\limits_{\mathbb{R}^2} -|u^{E}|^2\partial_{t}\phi\, dxdt=\int\limits_{0}^{T}\int\limits_{\mathbb{R}^2} (|u^{E}|^2 u^{E}+2p_{ u^{E}\otimes u^{E}}u^{E})\cdot\nabla \phi \,dxdt.
\end{split}
\end{equation*}

 \end{remark}
 
\subsection{Proof of Theorem \ref{thm:localenergybalance}}
From Theorem \ref{thm:nolocaanondis}, we see that for all $n=1,2\ldots$ there exists $t^{*}_{n}=t^{*}_{n}(n,\Omega,T,\varepsilon,N)$ such that 
\begin{equation}\label{eq:thm1conclusionvortboundrecall}
 \sup_{\nu>0}\|\omega^{\nu}\|_{L^{\infty}(0,t^{*}_{n}; L^{p}(\Omega(\frac{1}{n})))}<\infty\quad\textrm{and}
 \end{equation}
 \begin{equation}\label{eq:thm1conclusionrecall}
 \lim_{\nu\rightarrow 0}\nu\int\limits_{0}^{t^{*}_{n}}\int\limits_{\Omega(\frac{1}{n})} |\nabla u^{\nu}|^2 dxdt=0.
 \end{equation}
 By identical reasoning to \cite[Lemma 4.1]{SWW24} based on Aubin-Lions compactness arguments, we can select a diagonal subsequence of $u^{\nu}$ (which we still denote by $u^{\nu}$) such that 
 \begin{equation}\label{eq:strongKEconvergence}
 u^{\nu}\rightarrow u^{E}\quad\textrm{in}\quad C([0,t^{*}_{n}]; L^{2}(\Omega(\tfrac{1}{n})))\quad\forall n=1\ldots.
 \end{equation}
\textbf{Strong convergence locally in $L^{3}_{x,t}$}\\
Recall from \ref{eq:L1plusinfinity} that for all $r>0$
\begin{equation}\label{eq:L1plusinfinityrecall}
 \sup_{\nu>0}\|u^{\nu}\|_{L^{1+\varepsilon}(0,T; L^{\infty}(\Omega(r)))}=N(r)<\infty.
 \end{equation}
 By Lebesgue interpolation, \eqref{eq:L1plusinfinityrecall} and H\"{o}lder's inequality, we have that for any $\nu>0$ and $\nu'>0$ that 
 \begin{equation}\label{eq:L3interpolation1}
 \begin{split}
 &\|u^{\nu}-u^{\nu'}\|_{L^{3}(0,t^{*}_{n}; L^{3}(\Omega(\frac{1}{n})))}\\
 &\leq \|u^{\nu}-u^{\nu'}\|_{L^{\infty}(0,t^{*}_{n}; L^{2}(\Omega(\frac{1}{n})))}^{\frac{2}{3}}\|u^{\nu}-u^{\nu'}\|_{L^{1}(0,t^{*}_{n}; L^{\infty}(\Omega(\frac{1}{n})))}^{\frac{1}{3}}\\
 &\leq (2N(\tfrac{1}{n}))^{\frac{1}{3}}(t^{*}_{n})^{\frac{\varepsilon}{3(1+\varepsilon)}}\|u^{\nu}-u^{\nu'}\|_{L^{\infty}(0,t^{*}_{n}; L^{2}(\Omega(\frac{1}{n})))}^{\frac{2}{3}}.
 \end{split}
 \end{equation}
 From this and \eqref{eq:strongKEconvergence}, we infer that
\begin{equation}\label{eq:strongL3convergenceomegan}
 u^{\nu}\rightarrow u^{E}\quad\textrm{in}\quad L^3(0,t^{*}_{n}; L^{3}(\Omega(\tfrac{1}{n})))\quad\forall n=1\ldots.
\end{equation}
Next,  for each $n=1,2\ldots$ $u^{\nu}$ satisfies 
 \begin{equation}\label{eq:2dNSweakformthm2}
 \begin{split}
 &\int\limits_{0}^{t^{*}_{n}}\int\limits_{\Omega(\tfrac{1}{n})} -u^{\nu}\cdot\partial_{t}\theta-\nu u^{\nu}\cdot\Delta\theta-u^{\nu}\otimes u^{\nu}:\nabla \theta\, dxdt=0\\
 &\forall\quad \textrm{divergence-free}\quad  \theta\in C^{\infty}_{0}(\Omega(\tfrac{1}{n})\times (0,t^{*}_{n})).
 \end{split}
 \end{equation}
 From this and \eqref{eq:strongL3convergenceomegan}, we see from taking the limit as $\nu\rightarrow 0$ that $u^{E}$ is a weak solution to the 2D Euler equations on $\Omega(\tfrac{1}{n})\times (0,t^{*}_{n})$ for each $n=1,2\ldots$.\\
 \textbf{Local energy balance}\\
 To summarize, from \eqref{eq:thm1conclusionrecall}, \eqref{eq:strongKEconvergence} and \eqref{eq:strongL3convergenceomegan}, we have that 
 for all $r>0$, there exists $t^{*}_{r}=t^{*}_{r}(r,\Omega,T,\varepsilon,N)\in (0,T]$ such that as $\nu\rightarrow 0$ (along the subsequence) 
 \begin{equation}\label{eq:noanonomegar}
 \nu\int\limits_{0}^{t^{*}_{r}}\int\limits_{\Omega(r)} |\nabla u^{\nu}|^2 dxds\rightarrow 0,
 \end{equation}
 \begin{equation}\label{eq:strongconvergKEomegar}
  u^{\nu}\rightarrow u^{E}\quad\textrm{in}\quad C([0,t^{*}_{r}]; L^{2}(\Omega(r)))\quad\textrm{and}
 \end{equation}
 \begin{equation}\label{eq:strongL3recallomegar}
 u^{\nu}\rightarrow u^{E}\quad\textrm{in}\quad L^{3}(0,t^{*}_{r}; L^{3}(\Omega(r))).
 \end{equation}
   
 Let $\varphi\in C^{\infty}_{0}(\Omega; [0,1])$ be a test function with
 \begin{equation}\label{eq:supportvarphirecall}
 \Omega_{0}\Subset \{x:\varphi(x)=1\}\subset \supp(\varphi)\Subset\Omega(r),
 \end{equation}
 Let us now show that $\mathbb{P}_{\varphi}(u^{E})$ satisfies the local energy balance in $\Omega_{0}\times (0,t^{*}_{r})$. As $\mathbb{P}_{\varphi}(u^{\nu})$ satisfies  \eqref{eq:localLerayeqn} in $\Omega_0\times (0,T)$, we have that for $\phi\in C^{\infty}_{0}(\Omega_{0}\times (0,t^{*}_{r}))$ that
 \begin{equation}\label{eq:localenergybalanceeqNSE}
\begin{split}
&\int\limits_{0}^{t^{*}_{r}}\int\limits_{\Omega_0} -|\mathbb{P}_{\varphi}(u^{\nu})|^2\partial_{t}\phi dxdt=\int\limits_{0}^{t^{*}_{r}}\int\limits_{\Omega_0} (|u^{\nu}|^2 u^{\nu}+2p_{ u^{\nu}\otimes u^{\nu}}^{\varphi}\mathbb{P}_{\varphi}(u^{\nu}))\cdot\nabla \phi dxdt\\
&-2\int\limits_{0}^{t^{*}_{r}}\int\limits_{\Omega_0} u^{\nu}\otimes u^{\nu}: \nabla(\phi h_{\varphi}(u^{\nu}))+F^{\varphi}_{u^{\nu}\otimes u^{\nu}}\cdot \mathbb{P}_{\varphi}(u^{\nu}) \phi dxdt\\
&+\nu \int\limits_{0}^{t^{*}_{r}}\int\limits_{\Omega_0} |\mathbb{P}_{\varphi}(u^{\nu})|^2\Delta\phi-2|\nabla\mathbb{P}_{\varphi} u^{\nu}|^2\phi-2\phi \mathbb{P}_{\varphi}(u^{\nu})\cdot [\Delta, \mathbb{P}_{\varphi}]u^{\nu}dxdt 
\end{split}
\end{equation}
From the estimate \eqref{eq:CZlocalleray}, the energy inequality \eqref{eq:energyequality} and the assumption \ref{eq:L2iduniform}, we have that
\begin{equation}\label{eq:nondissest}
\begin{split}
&\nu\int\limits_{0}^{t^{*}_{r}}\int\limits_{\Omega_0}|\nabla \mathbb{P}_{\varphi} u^{\nu}|^2 dxds\lesssim_{\varphi} \nu\int\limits_{0}^{t^{*}_{r}}\int\limits_{\Omega(r)}|\nabla u^{\nu}|^2 dxds\\
&+\nu\int\limits_{0}^{t^{*}_{r}}\int\limits_{\Omega(r)}| u^{\nu}|^2 dxds\lesssim \nu t^{*}_{r}J^2+\nu\int\limits_{0}^{t^{*}_{r}}\int\limits_{\Omega(r)}|\nabla u^{\nu}|^2 dxds.
\end{split}
\end{equation}
Using \eqref{eq:noanonomegar}, we then have
\begin{equation}\label{eq:noanondisslocalleray}
\lim_{\nu\rightarrow 0}\nu\int\limits_{0}^{t^{*}_{r}}\int\limits_{\Omega_0}|\nabla \mathbb{P}_{\varphi} u^{\nu}|^2 dxds= 0.
\end{equation}
From the estimates \eqref{eq:CZlocalleray0} and \eqref{eq:localpressureest}, together with the convergence \eqref{eq:strongL3recallomegar}, we infer  that as $\nu\rightarrow 0$ (along the subsequence) that
\begin{equation}\label{eq:strongL3localleray}
\mathbb{P}_{\varphi}u^{\nu}\rightarrow \mathbb{P}_{\varphi}u^{E}\quad\textrm{in}\quad L^{3}(0,t^{*}_{r}; L^{3}(\Omega_0))\quad\textrm{and}
\end{equation}
\begin{equation}\label{eq:strongL32pressure}
p^{\varphi}_{u^{\nu}\otimes u^{\nu}}\rightarrow p^{\varphi}_{u^{E}\otimes u^{E}} \quad\textrm{in}\quad L^{\frac{3}{2}}(0,t^{*}_{r}; L^{\frac{3}{2}}(\Omega_0)).
\end{equation}
The estimates \eqref{eq:hestimate} and \eqref{eq:commlaplaceestimate}-\eqref{eq:nonlincommutator}, together with the convergence \eqref{eq:strongconvergKEomegar}, imply that for all $k=0,1\ldots$
\begin{equation}\label{eq:harmonicstrongconvergence}
\nabla^{k} h_{\varphi}(u^{\nu})\rightarrow  \nabla^{k} h_{\varphi}(u^{E})\quad\textrm{in}\quad  L^{\infty}(0,t^{*}_{r}; L^{\infty}(\Omega_0)),
\end{equation}
\begin{equation}\label{eq:laplacecommutatorstrongconvergence}
\nabla^{k} [\Delta,\mathbb{P}_{\varphi}]u^{\nu}\rightarrow  \nabla^{k} [\Delta,\mathbb{P}_{\varphi}]u^{E}\quad\textrm{in}\quad  L^{\infty}(0,t^{*}_{r}; L^{\infty}(\Omega_0))\quad\textrm{and}
\end{equation}
\begin{equation}\label{eq:nonlincommutatorest}
\nabla^{k} F^{\varphi}_{u^{\nu}\otimes u^{\nu}}\rightarrow  \nabla^{k} F^{\varphi}_{u^{E}\otimes u^{E}}\quad\textrm{in}\quad  L^{\infty}(0,t^{*}_{r}; L^{\infty}(\Omega_0)).
\end{equation}
The convergences \eqref{eq:strongL3recallomegar} and \eqref{eq:noanondisslocalleray}-\eqref{eq:nonlincommutatorest} allow us to take the limit in \eqref{eq:localenergybalanceeqNSE} $\nu\rightarrow 0$ (along the subsequence). This gives that 
\begin{equation*}
\begin{split}
&\int\limits_{0}^{t^{*}_{r}}\int\limits_{\Omega_0} -|\mathbb{P}_{\varphi}(u^{E})|^2\partial_{t}\phi dxdt=\int\limits_{0}^{t^{*}_{r}}\int\limits_{\Omega_0} (|u^{E}|^2 u^{E}+2p_{ u^{E}\otimes u^{E}}^{\varphi}\mathbb{P}_{\varphi}(u^{E}))\cdot\nabla \phi dxdt\\
&-2\int\limits_{0}^{t^{*}_{r}}\int\limits_{\Omega_0} u^{E}\otimes u^{E}: \nabla(\phi h_{\varphi}(u^{E}))+F^{\varphi}_{u^{E}\otimes u^{E}}\cdot \mathbb{P}_{\varphi}(u^{E}) \phi dxdt. 
\end{split}
\end{equation*}
Since this holds for arbitrary $\phi\in C^{\infty}_{0}(\Omega_{0}\times (0,t^{*}_{r}))$, we have that $\mathbb{P}_{\varphi}(u^{E})$ satisfies the local energy balance in $\Omega_{0}\times (0,t^{*}_{r})$ in the sense of Definition  \ref{def:locallerayenergybalance}.

\subsection{Proof of Theorem \ref{thm:localenergybalancecompactL3}}

\begin{itemize}
\item[]\!\!\!\!\!\!\!\!\!\!\!\!\!\!\!\!\!\!\!\!\!\!\textbf{Absence of anomalous dissipation on $\Omega(r)\times (\delta,t^*_{r})$}
\end{itemize}

Define
\begin{equation}\label{eq:R0defthm2}
R_0:=\frac{r}{4}.
\end{equation}
From the assumption \ref{eq:L1plusinfinity} and H\"{o}lder's inequality, we have for any $z\in \Omega(r)$ and $0<S\leq T$ that for all $\nu>0$
\begin{equation}\label{eq:L1Linfinitybound1thm3}
\|u^{\nu}\|_{L^{1}(0,S; L^{\infty}(B(z,R_0)))}\leq S^{\frac{\varepsilon}{1+\varepsilon}}\|u^{\nu}\|_{L^{1+\varepsilon}(0,S; L^{\infty}(\Omega(R_0)))}\leq  S^{\frac{\varepsilon}{1+\varepsilon}} N(R_0).
\end{equation}
Define $t^*_{r}=t^*_{r}({r,\Omega,T,\varepsilon,N})\in (0,T]$ by
\begin{equation}\label{eq:trdefthm3}
t^*_{r}:=\min\Big(T,\frac{1}{2}\Big(\frac{R_0}{8N(R_0)}\Big)^{\frac{1+\varepsilon}{\varepsilon}}\Big).
\end{equation}
Then from \eqref{eq:L1Linfinitybound1thm3} we see that for all $\nu>0$
\begin{equation}\label{eq:L1LinfinityR08thm3}
\|u^{\nu}\|_{L^{1}(0,t^*_{r}; L^{\infty}(B(z,R_0)))}<\frac{R_0}{8}.
\end{equation}
Applying Proposition \ref{prop:aprioriest1.1} for $0<\delta\leq t^*_{r} \quad\textrm{and}\quad 0<\nu< (\frac{5R_0}{8})^{2} $, together with using the energy equality and \ref{eq:L2iduniform}, gives for each $z\in \Omega(r)$
\begin{equation}\label{eq:aprioriest1.1thm3}
\begin{split}
&\nu\int\limits_{\delta}^{t^*_{r}} \int\limits_{B(z,\frac{R_0}{8})} |\nabla u^{\nu}|^2 dxds\lesssim\Big(1+\frac{\nu}{R_0^2}+\frac{1}{\delta}+\frac{\nu t^*_{r}}{\delta R_0^2}\Big)^{\frac{1}{2}} J\times\\
 &\Big(\int\limits_{{\delta}}^{t^*_{r}}\dashint_{B(\nu^{\frac{1}{2}})}\int_{B(z,\frac{3 R_0}{8})}|u^{\nu}(x-y,s)-u^{\nu}(x,s)|^2 dxdyds\Big)^{\frac{1}{2}}\\
 &\leq \Big(1+\frac{\nu}{R_0^2}+\frac{1}{\delta}+\frac{\nu t^*_{r}}{\delta R_0^2}\Big)^{\frac{1}{2}} J\times\\
 &\Big(\int\limits_{{\delta}}^{t^*_{r}}\dashint_{B(\nu^{\frac{1}{2}})}\int_{\Omega(\frac{29r}{32})}|u^{\nu}(x-y,s)-u^{\nu}(x,s)|^2 dxdyds\Big)^{\frac{1}{2}}
\end{split}
\end{equation}
Since $\Omega(r)\subset \Omega$ is compact, we let $(z_{i})_{1\leq i\leq Q}$ be in $\Omega(r)$ such that we have the finite covering
$$\Omega(r)\subset\cup_{i=1}^{Q} B(z_{i}, \tfrac{R_0}{8}).$$
Applying \eqref{eq:aprioriest1.1thm3} to each $z_{i}$ and summing over $i=1\ldots Q$ then gives that
\begin{equation}\label{eq:locanomdissfinalestthm3}
\begin{split}
&\nu\int\limits_{\delta}^{t^*_{r}} \int\limits_{\Omega(r)} |\nabla u^{\nu}|^2 dxds\lesssim \Big(1+\frac{\nu}{R_0^2}+\frac{1}{\delta}+\frac{\nu t^*_{r}}{\delta R_0^2}\Big)^{\frac{1}{2}} JQ\times\\
&\Big(\int\limits_{{\delta}}^{t^*_{r}}\dashint_{B(\nu^{\frac{1}{2}})}\int_{\Omega(\frac{29r}{32})}|u^{\nu}(x-y,s)-u^{\nu}(x,s)|^2 dxdyds\Big)^{\frac{1}{2}}. 
\end{split}
\end{equation}
The strong convergence \eqref{eq:strongL3thm3}, together with the triangle inequality and H\"{o}lder's inequality, readily imply that
$$\lim_{\nu\rightarrow 0}\int\limits_{{\delta}}^{t^*_{r}}\dashint_{B(\nu^{\frac{1}{2}})}\int_{\Omega(\frac{29r}{32})}|u^{\nu}(x-y,s)-u^{\nu}(x,s)|^2 dxdyds=0. $$
Hence, from \eqref{eq:locanomdissfinalestthm3} we obtain
that for $0<\delta\leq t^*_{r}$ that
\begin{equation}\label{eq:vanishingdissstep1}
\lim_{\nu\rightarrow 0}\nu\int\limits_{\delta}^{t^*_{r}} \int\limits_{\Omega(r)} |\nabla u^{\nu}|^2 dxds=0.
\end{equation}

Next,  $u^{\nu}$ satisfies 
 \begin{equation}\label{eq:2dNSweakformthm3}
 \begin{split}
 &\int\limits_{0}^{t^*_{r}}\int\limits_{\Omega(r)} -u^{\nu}\cdot\partial_{t}\theta-\nu u^{\nu}\cdot\Delta\theta-u^{\nu}\otimes u^{\nu}:\nabla \theta\, dxdt=0\\
 &\forall\quad \textrm{divergence-free}\quad  \theta\in C^{\infty}_{0}(\Omega(r)\times (0,t^*_{r})).
 \end{split}
 \end{equation}
 From this and \eqref{eq:strongL3thm3}, we see from taking the limit as $\nu\rightarrow 0$ that $u^{E}$ is a weak solution to the 2D Euler equations on $\Omega(r)\times (0,t^*_{r})$.\\
 \textbf{Local energy balance}\\
 To summarize, from \eqref{eq:vanishingdissstep1} and \eqref{eq:strongL3thm3}, we have that there exists $t^*_{r}=t^*_{r}(r,\Omega,T,\varepsilon,N)$ belonging to $(0,T]$ such that as $\nu\rightarrow 0$ 
 \begin{equation}\label{eq:noanonomegarthm3}
 \nu\int\limits_{\delta}^{t^*_{r}}\int\limits_{\Omega(r)} |\nabla u^{\nu}|^2 dxds\rightarrow 0\quad\textrm{for}\quad 0<\delta\leq t^*_{r}\quad\textrm{and}
 \end{equation}
 \begin{equation}\label{eq:strongL3recallomegarthm3}
 u^{\nu}\rightarrow u^{E}\quad\textrm{in}\quad L^{3}(0,t^*_{r}; L^{3}(\Omega(r))).
 \end{equation}
   
 Let $\varphi\in C^{\infty}_{0}(\Omega; [0,1])$ be a test function with
 \begin{equation}\label{eq:supportvarphirecallthm3}
 \Omega_{0}\Subset \{x:\varphi(x)=1\}\subset \supp(\varphi)\Subset\Omega(r),
 \end{equation}
 Let us now show that $\mathbb{P}_{\varphi}(u^{E})$ satisfies the local energy balance in $\Omega_{0}\times (0,t^*_{r})$. As $\mathbb{P}_{\varphi}(u^{\nu})$ satisfies  \eqref{eq:localLerayeqn} in $\Omega_0\times (0,T)$, we have that for $\phi\in C^{\infty}_{0}(\Omega_{0}\times (0,t^*_{r}))$ that
 \begin{equation}\label{eq:localenergybalanceeqNSEthm3}
\begin{split}
&\int\limits_{0}^{t^*_{r}}\int\limits_{\Omega_0} -|\mathbb{P}_{\varphi}(u^{\nu})|^2\partial_{t}\phi dxdt=\int\limits_{0}^{t^*_{r}}\int\limits_{\Omega_0} (|u^{\nu}|^2 u^{\nu}+2p_{ u^{\nu}\otimes u^{\nu}}^{\varphi}\mathbb{P}_{\varphi}(u^{\nu}))\cdot\nabla \phi dxdt\\
&-2\int\limits_{0}^{t^*_{r}}\int\limits_{\Omega_0} u^{\nu}\otimes u^{\nu}: \nabla(\phi h_{\varphi}(u^{\nu}))+F^{\varphi}_{u^{\nu}\otimes u^{\nu}}\cdot \mathbb{P}_{\varphi}(u^{\nu}) \phi dxdt\\
&+\nu \int\limits_{0}^{t^*_{r}}\int\limits_{\Omega_0} |\mathbb{P}_{\varphi}(u^{\nu})|^2\Delta\phi-2|\nabla\mathbb{P}_{\varphi} u^{\nu}|^2\phi-2\phi \mathbb{P}_{\varphi}(u^{\nu})\cdot [\Delta, \mathbb{P}_{\varphi}]u^{\nu}dxdt 
\end{split}
\end{equation}
As $\phi\in C^{\infty}_{0}(\Omega_{0}\times (0,t^*_{r}))$, there exists $0<\delta\leq t^*_{r}$ such that $\phi\in C^{\infty}_{0}(\Omega_{0}\times (\delta,t^*_{r}))$.
From this, the estimate \eqref{eq:CZlocalleray}, the energy inequality \eqref{eq:energyequality} and the assumption \ref{eq:L2iduniform}, we have that
\begin{equation}\label{eq:nondissestthm3}
\begin{split}
&\nu\int\limits_{0}^{t^*_{r}}\int\limits_{\Omega_0}|\nabla \mathbb{P}_{\varphi} u^{\nu}|^2 \phi dxds\lesssim_{\varphi} \nu\int\limits_{\delta}^{t^*_{r}}\int\limits_{\Omega(r)}|\nabla u^{\nu}|^2 dxds\\
&+\nu\int\limits_{\delta}^{t^*_{r}}\int\limits_{\Omega(r)}| u^{\nu}|^2 dxds\lesssim \nu t^*_{r}J^2+\nu\int\limits_{\delta}^{t^*_{r}}\int\limits_{\Omega(r)}|\nabla u^{\nu}|^2 dxds.
\end{split}
\end{equation}
Using \eqref{eq:noanonomegarthm3}, we then have
\begin{equation}\label{eq:noanondisslocalleraythm3}
\lim_{\nu\rightarrow 0}\nu\int\limits_{0}^{t^*_{r}}\int\limits_{\Omega_0}|\nabla \mathbb{P}_{\varphi} u^{\nu}|^2 \phi dxds= 0.
\end{equation}
From the estimates \eqref{eq:CZlocalleray0} and \eqref{eq:localpressureest}, together with the convergence \eqref{eq:strongL3recallomegarthm3}, we infer  that as $\nu\rightarrow 0$ (along the subsequence) that
\begin{equation}\label{eq:strongL3localleraythm3}
\mathbb{P}_{\varphi}u^{\nu}\rightarrow \mathbb{P}_{\varphi}u^{E}\quad\textrm{in}\quad L^{3}(0,t^*_{r}; L^{3}(\Omega_0))\quad\textrm{and}
\end{equation}
\begin{equation}\label{eq:strongL32pressurethm3}
p^{\varphi}_{u^{\nu}\otimes u^{\nu}}\rightarrow p^{\varphi}_{u^{E}\otimes u^{E}} \quad\textrm{in}\quad L^{\frac{3}{2}}(0,t^*_{r}; L^{\frac{3}{2}}(\Omega_0)).
\end{equation}
The estimates \eqref{eq:hestimate} and \eqref{eq:commlaplaceestimate}-\eqref{eq:nonlincommutator}, together with the convergence \eqref{eq:strongL3recallomegarthm3}, imply that for all $k=0,1\ldots$
\begin{equation}\label{eq:harmonicstrongconvergencethm3}
\nabla^{k} h_{\varphi}(u^{\nu})\rightarrow  \nabla^{k} h_{\varphi}(u^{E})\quad\textrm{in}\quad  L^{3}(0,t^*_{r}; L^{\infty}(\Omega_0)),
\end{equation}
\begin{equation}\label{eq:laplacecommutatorstrongconvergencethm3}
\nabla^{k} [\Delta,\mathbb{P}_{\varphi}]u^{\nu}\rightarrow  \nabla^{k} [\Delta,\mathbb{P}_{\varphi}]u^{E}\quad\textrm{in}\quad  L^{3}(0,t^*_{r}; L^{\infty}(\Omega_0))\quad\textrm{and}
\end{equation}
\begin{equation}\label{eq:nonlincommutatorestthm3}
\nabla^{k} F^{\varphi}_{u^{\nu}\otimes u^{\nu}}\rightarrow  \nabla^{k} F^{\varphi}_{u^{E}\otimes u^{E}}\quad\textrm{in}\quad  L^{\frac{3}{2}}(0,t^*_{r}; L^{\infty}(\Omega_0)).
\end{equation}
The convergences \eqref{eq:strongL3recallomegarthm3} and \eqref{eq:noanondisslocalleraythm3}-\eqref{eq:nonlincommutatorestthm3} allow us to take the limit in \eqref{eq:localenergybalanceeqNSEthm3} $\nu\rightarrow 0$ (along the subsequence). This gives that 
\begin{equation*}
\begin{split}
&\int\limits_{0}^{t^*_{r}}\int\limits_{\Omega_0} -|\mathbb{P}_{\varphi}(u^{E})|^2\partial_{t}\phi dxdt=\int\limits_{0}^{t^*_{r}}\int\limits_{\Omega_0} (|u^{E}|^2 u^{E}+2p_{ u^{E}\otimes u^{E}}^{\varphi}\mathbb{P}_{\varphi}(u^{E}))\cdot\nabla \phi dxdt\\
&-2\int\limits_{0}^{t^*_{r}}\int\limits_{\Omega_0} u^{E}\otimes u^{E}: \nabla(\phi h_{\varphi}(u^{E}))+F^{\varphi}_{u^{E}\otimes u^{E}}\cdot \mathbb{P}_{\varphi}(u^{E}) \phi dxdt. 
\end{split}
\end{equation*}
Since this holds for arbitrary $\phi\in C^{\infty}_{0}(\Omega_{0}\times (0,t^*_{r}))$, we have that $\mathbb{P}_{\varphi}(u^{E})$ satisfies the local energy balance in $\Omega_{0}\times (0,t^*_{r})$ in the sense of Definition  \ref{def:locallerayenergybalance}.
\section*{Acknowledgments}
T.B. wishes to thank Universidade Federal do Rio de Janeiro for its gracious hospitality.
T.B. is supported by an EPSRC New Investigator Award UKRI096 ‘Dynamics and regularity criteria for nonlinear incompressible partial differential equations' and was partly supported by start-up funds from the University of Bath. H.J.N.L. and M.C.L.F are grateful for the kind hospitality of the University of Bath. M.C.L.F. wishes to thank the INSMI-CNRS for the invited CNRS research position for the thematic trimester program on Mathematical developments in Geophysical Fluid Dynamics  from April 13 to July 10, 2026 at the Institut Henri  Poincaré, Paris. H.J.N.L. thanks ENS-PSL for the Invited Professor appointment for the same program. Both M.C.L.F. and H.J.N.L. thank the support of the Institut Henri Poincar\'e (UAR 389 CNRS-Sorbonne Universit\'e), and the LabEx CARMIN (ANR-10-LABX-59-01). M.C.L.F. is partially supported by CNPq through grant 304990/2022-1.
H.J.N.L. is supported in part by CNPq through grant 305309/2022-6 and by FAPERJ through grant E-26/200.273/2026.

\bibliography{AbsenceLocAnomDissip2D_Toby-Lopess_Ver03}
\bibliographystyle{abbrv}

\end{document}